\documentclass[11pt]{amsart}

\usepackage[margin=.9in]{geometry}
\usepackage[T1]{fontenc}
\usepackage[utf8]{inputenc}
\usepackage{lmodern}
\usepackage{amsmath,amssymb,amsthm,mathtools}
\usepackage[shortlabels]{enumitem}
\usepackage{tikz}
\usetikzlibrary{decorations.pathreplacing,decorations.pathmorphing,arrows.meta}
\usepackage{caption}
\PassOptionsToPackage{hyphens}{url}
\usepackage[colorlinks=true,linkcolor=blue,citecolor=blue,urlcolor=blue]{hyperref}
\hypersetup{
  pdftitle={A Direct Stein Proof of the Burke Property for the Stationary O'Connell--Yor Polymer},
  pdfauthor={Ruixuan Zhang},
  pdfsubject={Stationary O'Connell--Yor polymer and the Burke property},
  pdfkeywords={O'Connell--Yor polymer, Burke property, Gaussian integration by parts, Stein's method}
}

\allowdisplaybreaks
\AtBeginDocument{%
  \setlength{\abovedisplayskip}{3pt plus 2pt minus 1pt}%
  \setlength{\belowdisplayskip}{3pt plus 2pt minus 1pt}%
  \setlength{\abovedisplayshortskip}{1.5pt plus 1pt}%
  \setlength{\belowdisplayshortskip}{2.5pt plus 1pt minus 1pt}%
  \setlength{\jot}{1.25pt}%
}
\numberwithin{equation}{section}
\numberwithin{figure}{section}

\newtheorem{theorem}{Theorem}[section]
\newtheorem{corollary}[theorem]{Corollary}
\newtheorem{lemma}[theorem]{Lemma}
\newtheorem{proposition}[theorem]{Proposition}
\theoremstyle{definition}

\theoremstyle{remark}
\newtheorem{remark}[theorem]{Remark}

\newcommand{\R}{\mathbb{R}}
\newcommand{\Z}{\mathbb{Z}}
\newcommand{\EE}{\mathbb{E}}

\newcommand{\DD}{\mathbb{D}}
\newcommand{\ind}{\mathbf{1}}
\newcommand{\cB}{\mathcal{B}}
\newcommand{\cK}{\mathcal{K}}
\newcommand{\cV}{\mathcal{V}}
\newcommand{\cZ}{\mathcal{Z}}
\newcommand{\be}{\beta}
\newcommand{\eps}{\varepsilon}
\newcommand{\fH}{\mathfrak{H}}
\newcommand{\ga}{\gamma}
\newcommand{\DN}{\Delta_n}
\newcommand{\Var}{\text{Var}}
\newcommand{\OCY}{\textup{O'Connell--Yor}}
\DeclareMathOperator{\sgn}{sgn}

\tikzset{
  brownian background/.style={
    color={rgb,255:red,155;green,155;blue,155},
    draw opacity=.66,
    line cap=round,
    decorate,
    decoration={random steps,segment length=7pt,amplitude=1.4pt}
  },
  >=Latex,
  bluepath/.style={line width=1.1pt,blue}
}

\title[A Stein proof of the Burke property]
{A Stein Proof of the Burke Property\\
for the Stationary O'Connell--Yor Polymer}

\author[R.~Zhang]{Ruixuan Zhang}
\address{R. Zhang, Department of Mathematics, University of Utah,
Salt Lake City, UT 84112, USA}
\email{ray.zhang@math.utah.edu}

\date{September 15, 2026}
\keywords{
O'Connell--Yor polymer
$\cdot$ Burke property
$\cdot$ Gaussian integration by parts
$\cdot$ Stein's method
}

\begin{document}

\begin{abstract}
We give a direct proof of the Burke property for the stationary O’Connell–Yor polymer using Stein's method. Building on a free-energy identity developed in joint work with Firas Rassoul-Agha, Xiao Shen, and Guangqu Zheng, we establish a multilevel extension that yields a joint Gaussian–Gamma Stein identity, from which the full Burke property follows. The proof is based on a Gaussian integration-by-parts framework developed in that work. Passing to zero temperature gives the corresponding result for stationary point-to-line Brownian last-passage percolation.
\end{abstract}

\maketitle
\enlargethispage{7pt}

\section{Introduction}

Among \((1+1)\)-dimensional random growth models in the
Kardar--Parisi--Zhang (KPZ) universality class, the \(\OCY\) polymer of O'Connell
and Yor \cite{OCY01} is one of the fundamental exactly solvable directed
polymer models.  Its stationary version satisfies a Brownian analogue of
Burke's theorem.  Along any down-right path, the horizontal free-energy
increments have the law of independent Brownian increments, the vertical
partition-function ratios have Gamma laws, and the two families are
independent.

The Burke property is an important structural input in the study of
stationary measures and Busemann processes in KPZ-related models.  Its explicit
marginal laws and independence statements make the stationary boundary data
tractable.  This structure has been used to prove fluctuation bounds
\cite{SV10}, to construct and classify semi-infinite polymer measures
\cite{ARS20,JanjigianRassoulAgha20}, and to analyze coalescence and the global
geometry of semi-infinite geodesics in Brownian last-passage percolation and
the directed landscape \cite{SeppalainenSorensen23,
BusaniSeppalainenSorensen24}.  It also underlies the description of joint
stationary measures and Busemann processes for the KPZ equation
\cite{GroathouseRassoulAghaSeppalainenSorensen25}, as well as independence
estimates for Busemann increments and polymer endpoints \cite{Shen25}.

The original proof is based on the generalized Brownian queue of O'Connell
and Yor \cite{OCY01}.  It combines the invariance of the Matsumoto--Yor path
transformation \cite{MY01} with Dufresne's identity \cite{Duf01}.  We take a
different route.  Our argument is probabilistic, based on Gaussian integration
by parts, and does not rely on the algebraic path-transform structure underlying the classical proof.  Starting from
the free-energy identity of \cite{RassoulAghaShenZhangZheng26}, we extend the
identity to polymers started
from different levels and obtain a joint Gaussian--Gamma Stein identity at a
single corner of a down-right path.  This identifies the horizontal and
vertical increments at that corner, including their independence.  Peeling
off the corners one at a time then gives the full Burke property.  The same
description survives the zero-temperature limit and yields the corresponding
result for stationary point-to-line Brownian last-passage percolation. As a further consequence, the diagonal free-energy identity connects the free-energy variance with polymer exit geometry, recovering the Seppäläinen–Valkó variance identity \cite[Theorem~3.6]{SV10} in differential form and yielding its zero-temperature analogue for stationary Brownian last-passage percolation.

Section~\ref{sec:model} introduces the model and its general-level version.
Section~\ref{sec:main} states the multilevel identity, derives the one-corner
Stein identity, and proves the Burke properties.  Section~\ref{sec:proof}
contains the proofs of the multilevel and one-corner identities. 

\section{Model and notation}
\label{sec:model}

We begin with the stationary point-to-line polymer started from level zero.
The general-level version introduced afterward is needed to compare free
energies attached to different horizontal pieces of a down-right path.

\subsection{Stationary point-to-line polymers}

Fix a terminal level \(n\in\Z_{>0}\) and an inverse temperature
\(\be>0\).  We use the jump times of a rate-\(\be\) Poisson process to
parametrize polymer paths.  Set \(\ga_0=0\), write
\(\DN=\{0<\ga_1<\cdots<\ga_n\}\), and denote the law of the first \(n\)
jump times by \(\mu_\be\).  Thus
\begin{align}\label{eq:reference}
  \mu_\be(d\ga)
  =
  \be^ne^{-\be\ga_n}
  \ind_{\DN}(\ga)\,
  d\ga_1\cdots d\ga_n.
\end{align}

Let \(\{B_i\}_{i\geq0}\) be independent two-sided standard Brownian
motions with \(B_i(0)=0\).  The vector \(\ga\in\DN\) represents an
up-right path from \((x,0)\) to level \(n\).  On level \(i\), it travels
from \(x+\ga_i\) to \(x+\ga_{i+1}\), and its terminal point is
\((x+\ga_n,n)\).  Thus the spatial jump locations are
\(x+\ga_i\), \(1\leq i\leq n\).

The Hamiltonian collected by this path is
\begin{align}\label{eq:stationary-H}
  H_x(\ga)
  =
  B_n(x+\ga_n)
  +
  \sum_{i=0}^{n-1}
  \bigl(
    B_i(x+\ga_{i+1})-B_i(x+\ga_i)
  \bigr).
\end{align}
The corresponding point-to-line partition function, free
energy, and quenched polymer measure are
\begin{align}\label{eq:stationary-Z}
  Z_x^\be=
  \be^{-n}
  \int_{\DN}
    e^{\be H_x(\ga)}\,
    \mu_\be(d\ga),
  \qquad F_x^\be=\frac1\be\log Z_x^\be,
  \qquad
  Q_x^\be(d\ga)
  =
  \frac{\be^{-n}e^{\be H_x(\ga)}}
       {Z_x^\be}
  \mu_\be(d\ga).
\end{align}
The partition function is finite almost surely by
\cite[Lemma~A.2]{RassoulAghaShenZhangZheng26}.  Here and below, \(\EE\)
denotes expectation with respect to the Brownian environment.

\subsection{Polymers started from a general level}
To formulate the free-energy identity for polymers with different starting
points, we allow the \(\OCY\) polymer to start from an arbitrary level.  This
uses the same environment and changes only the set of levels traversed by the
path.  For \(0\leq k<n\), let
\(\Delta_{k,n}=\{0=\ga_k<\ga_{k+1}<\cdots<\ga_n\}\) and put
\begin{align}\label{eq:shifted-reference}
  \mu_\be^{k,n}(d\ga)=
  \be^{n-k}e^{-\be\ga_n}
  \ind_{\Delta_{k,n}}(\ga)\,
  d\ga_{k+1}\cdots d\ga_n.
\end{align}
For \(k=n\), set \(\Delta_{n,n}=\{\ga_n=0\}\) and
\(\mu_\be^{n,n}=\delta_0\).
A path under \(\mu_\be^{k,n}\) starts from \((x,k)\), enters level \(j\)
at \(x+\ga_j\), and terminates on level \(n\).  Its Hamiltonian, partition
function, free energy, and quenched measure are
\begin{align}
  &H_{(x,k),n}(\ga)=
  B_n(x+\ga_n)
  +\sum_{j=k}^{n-1}
    \bigl(
      B_j(x+\ga_{j+1})-B_j(x+\ga_j)
    \bigr),
  \label{eq:H}\\
  &Z_{(x,k),n}^{\be}=
  \be^{-(n-k)}
  \int_{\Delta_{k,n}}
    e^{\be H_{(x,k),n}(\ga)}
    \mu_\be^{k,n}(d\ga),
  \qquad F_{(x,k),n}^{\be}=\frac1\be\log Z_{(x,k),n}^{\be},
  \label{eq:Z}\\
  &Q_{(x,k),n}^{\be}(d\ga)=
  \frac{\be^{-(n-k)}e^{\be H_{(x,k),n}(\ga)}}
       {Z_{(x,k),n}^{\be}}
  \mu_\be^{k,n}(d\ga).
  \label{eq:Q}
\end{align}
At level zero these definitions reduce to those above:
\begin{align}\label{eq:level-zero-identification}
  Z_{(x,0),n}^{\be}
  =Z_x^\be,
  \qquad
  F_{(x,0),n}^{\be}
  =F_x^\be,
  \qquad
  Q_{(x,0),n}^{\be}
  =Q_x^\be.
\end{align}
At the other endpoint, when \(k=n\), there is no path integral and
\begin{align}\label{eq:top}
  Z_{(x,n),n}^{\be}=e^{\be B_n(x)},
  \qquad
  F_{(x,n),n}^{\be}=B_n(x),
  \qquad
  Q_{(x,n),n}^{\be}=\delta_0.
\end{align}

For \(1\leq i\leq n\), the vertical edge at \((x,i)\) carries the
free-energy increment and partition-function ratio
\begin{align}\label{eq:vertical}
  &\cV_x^\be(i)=F_{(x,i),n}^{\be}-F_{(x,i-1),n}^{\be},
  \qquad G_{x,i}^{\be}=
  e^{\be\cV_x^\be(i)}
  =
  \frac{Z_{(x,i),n}^{\be}}
       {Z_{(x,i-1),n}^{\be}}.
\end{align}
Throughout, \(\operatorname{Gamma}(a,\lambda)\) denotes the Gamma distribution with shape \(a\) and rate \(\lambda\), with density \(\lambda^ag^{a-1}e^{-\lambda g}\ind_{\{g>0\}}/\Gamma(a)\).

\section{Stein identities and the Burke property}
\label{sec:main}

We first record the variables carried by a down-right path and state the multilevel free-energy identity. A one-corner
Stein identity then separates the variables at the lowest corner from those
above it. Iterating this separation yields the Burke property.

\subsection{Variables along a down-right path}
\label{subsec:path-variables}
Fix \(x_n\leq x_{n-1}\leq\cdots\leq x_1\).  The associated down-right
path runs right on level \(n\) to \(x_n\), descends one level at each
\(x_k\), and reaches level zero at \(x_1\).  We record its horizontal
free-energy increments and vertical partition-function ratios by
\begin{align}
  \cB_n^\be(s)&=
F_{(x_n,n),n}^{\be}-F_{(x_n-s,n),n}^{\be},
\qquad s\geq0,\label{eq:top-process}\\
\cB_k^\be(s)&=
F_{(x_{k+1}+s,k),n}^{\be}
-F_{(x_{k+1},k),n}^{\be},
\qquad 0\leq s\leq x_k-x_{k+1},\quad 1\leq k<n,
\label{eq:middle-process}\\
\cB_0^\be(s)&=
F_{x_1+s}^\be-F_{x_1}^\be,
\qquad s\geq0,\label{eq:bottom-process}\\
G_i&=G_{x_i,i}^{\be},
\qquad 1\leq i\leq n.\label{eq:path-G}
\end{align}
See Figure~\ref{fig:Burke-path}.

\begin{center}
  \begin{minipage}{.88\linewidth}
  \centering
  \begin{tikzpicture}[x=.92cm,y=.72cm]
    \foreach \yy in {0,1,3,4}{
      \draw[gray!55] (0.3,\yy)--(9.4,\yy);
    }
    \node[left] at (0.3,0) {$0$};
    \node[left] at (0.3,1) {$1$};
    \node[left] at (0.3,2) {$\vdots$};
    \node[left] at (0.3,3) {$n-1$};
    \node[left] at (0.3,4) {$n$};

    \draw[bluepath]
      (0.55,4)--(1.85,4)--(1.85,3)--(3.25,3);
    \draw[bluepath,densely dotted]
      (3.25,3)--(4.10,3)--(4.10,2.55)
      --(4.85,2.55)--(4.85,2.00)
      --(5.55,2.00)--(5.55,1);
    \draw[bluepath]
      (5.55,1)--(7.25,1)--(7.25,0)--(9.15,0);

    \fill[blue] (1.85,4) circle (1.6pt);
    \fill[blue] (1.85,3) circle (1.6pt);
    \fill[blue] (3.25,3) circle (1.6pt);
    \fill[blue] (5.55,1) circle (1.6pt);
    \fill[blue] (7.25,1) circle (1.6pt);
    \fill[blue] (7.25,0) circle (1.6pt);

    \node[above right,font=\scriptsize] at (1.85,4) {$x_n$};
    \node[above right,font=\scriptsize] at (3.25,3) {$x_{n-1}$};
    \node[below left,font=\scriptsize] at (5.55,1) {$x_2$};
    \node[below right,font=\scriptsize] at (7.25,0) {$x_1$};

    \node[above,blue,font=\scriptsize] at (1.05,4)
      {$F_{(\,\cdot\,,n),n}^{\be}$};
    \node[above,blue,font=\scriptsize] at (2.55,3)
      {$F_{(\,\cdot\,,n-1),n}^{\be}$};
    \node[above,blue,font=\scriptsize] at (6.35,1)
      {$F_{(\,\cdot\,,1),n}^{\be}$};
    \node[below,blue,font=\scriptsize] at (8.25,0)
      {$F_{\,\cdot\,}^\be$};

    \node[left,font=\scriptsize] at (1.78,3.50)
      {$G_{x_n,n}^{\be}$};
    \node[right,font=\scriptsize] at (7.35,.50)
      {$G_{x_1,1}^{\be}$};
    \node at (4.65,2.20) {$\cdots$};
  \end{tikzpicture}
  \captionsetup{width=.84\linewidth,hypcap=false}
  \captionof{figure}{A down-right path determined by
  \(x_n\leq\cdots\leq x_1\).  Horizontal edges carry increments of
  \(F_{(\,\cdot\,,k),n}^{\be}\), and the vertical edge at \(x_i\)
  carries \(G_{x_i,i}^{\be}\).}
  \label{fig:Burke-path}
  \end{minipage}
\end{center}

\subsection{The level-zero identity and its multilevel extension}

We begin with the stationary Gaussian Stein identity from
\cite[Proposition~1.8]{RassoulAghaShenZhangZheng26}. Its linear case is
the two-point correlation formula in
\cite[Corollary~1.5]{RassoulAghaShenZhangZheng26}.

\begin{proposition}[Stationary Gaussian--Stein identity]
\label{prop:level-zero-input}
Fix \(m\geq1\), let \(\mathbf y=(y_1,\ldots,y_m)\in\R^m\), and set
\(\mathbf F_{\mathbf y}^{\be}=(F_{y_1}^{\be},\ldots,F_{y_m}^{\be})\).
Suppose that \(\varphi\in C^1(\R^m)\) and
\(\|\nabla\varphi\|_\infty<\infty\).
Then the map
\(x\mapsto\EE[\varphi(\mathbf F_{\mathbf y}^{\be})F_x^{\be}]\) is
continuously differentiable on each connected component of
\(\R\setminus\{y_1,\ldots,y_m\}\).  Moreover, for
\(x\notin\{y_1,\ldots,y_m\}\),
\begin{align}\label{eq:level-zero-input}
  \frac{\partial}{\partial x}
  \EE\left[\varphi(\mathbf F_{\mathbf y}^{\be})F_x^{\be}\right]
  =
  \sum_{j=1}^{m}\EE\left[
    \partial_j\varphi(\mathbf F_{\mathbf y}^{\be})
    \left\{Q_x^\be\{\ga_n>-x\}-\ind_{\{x>y_j\}}\right\}
  \right].
\end{align}
\end{proposition}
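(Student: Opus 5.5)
Since $\varphi(\mathbf F_{\mathbf y}^{\be})$ and $F_x^{\be}$ are functionals of a single Gaussian family, the plan is to use Gaussian integration by parts in the Malliavin sense, writing the covariance of $\varphi(\mathbf F_{\mathbf y}^{\be})$ with $F_x^{\be}$ in terms of Malliavin derivatives.

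\textbf{Malliavin derivatives.} The isonormal process is generated by $(B_i)_{i\ge0}$, with Hilbert space $\fH=\bigoplus_{i\ge0}L^2(\R)$. For a fixed path $\ga$, the Hamiltonian \eqref{eq:stationary-H} is linear in the noise. On level $i<n$,
\[
D_{i,r}H_x(\ga)=\ind_{(x+\ga_i,\,x+\ga_{i+1}]}(r)
\]
with the sign convention appropriate to two-sided Brownian motion. On level $n$, $D_{n,r}H_x(\ga)$ is the indicator of $[0,x+\ga_n]$, or minus the indicator of $[x+\ga_n,0]$. Hence
\[
D F_x^{\be}=E^{Q_x^\be}[D H_x(\ga)],
\]
and by the chain rule $D\varphi(\mathbf F_{\mathbf y}^{\be})=\sum_j\partial_j\varphi\, DF_{y_j}^{\be}$. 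I would justify membership in $\DD^{1,2}$ by truncating the path space and using the integrability from \cite[Lemma~A.2]{RassoulAghaShenZhangZheng26}.

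\textbf{Differentiating in $x$.} Differentiating directly in $x$ is awkward. Instead, I would use stationarity: the shift $B_i(\cdot)\mapsto B_i(\cdot+h)-B_i(h)$ preserves the law of the environment. Under this shift $F_x$ changes by an explicit additive term, essentially $B_n$ evaluated at the shift, and the paths are translated. This should reduce $\partial_x\EE[\varphi F_x]$ to an expression of the form
\[
\EE\bigl[\langle D\varphi(\mathbf F_{\mathbf y}^{\be}),\partial_x DF_x^{\be}\rangle_{\fH}\bigr],
\]
obtained via $\EE[\varphi F_x]=\EE[\varphi]\EE[F_x]+\EE\langle D\varphi,-DL^{-1}F_x\rangle$. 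A cleaner route is to work with $\EE[\varphi\,(F_x-F_{x'})]$, since difference quotients in $x$ of $DF_x$ are localized.

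\textbf{Computing the pairing.} For each $j$ we have
\[
\langle DF_{y_j},DF_x\rangle=E^{Q_{y_j}\otimes Q_x}[\text{overlap length}],
\]
where the overlap length is the total length of shared segments of the two paths on each level, counted with orientation. Its $x$-derivative equals minus the number of levels $i$ at which path $x$ and path $y_j$ share the point $x+\ga_i$, plus the contribution at the level-$n$ segment from $0$ to $x+\ga_n$. Using the Ornstein--Uhlenbeck representation $-DL^{-1}$, the pairing becomes an average over interpolated environments. The key mechanism is that the level-$n$ term gives $\ind\{x+\ga_n>0\}$, which is $Q_x\{\ga_n>-x\}$. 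The intermediate-level terms telescope: a path from $x$ crosses the point $y_j$ on exactly one level if $x<y_j$, contributing the step $-\ind_{\{x>y_j\}}$, up to the sign convention. Because each intermediate level's contribution is a jump of an indicator in $x$, the derivative collects exactly these boundary terms.

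\textbf{Main obstacle.} I expect the hardest step to be the exact evaluation of $\partial_x$ of the pairing without the Ornstein--Uhlenbeck interpolation spoiling the clean form. I would first try an alternative: directly differentiate $\EE[\varphi F_x]$ by translating the environment by $x$. Under the shift, $\mathbf F_{\mathbf y}$ becomes $\mathbf F_{\mathbf y-x}$ and $F_x$ becomes $F_0$ plus a Brownian term. Differentiating in $x$ then produces derivatives of $F_{y_j-x}$. These I would handle by a Cameron--Martin (Girsanov) perturbation of the noise by a drift $\ind_{[0,x]}$ on every level, which is exactly the shift generator. This yields $\ind\{x>y_j\}$ from the drift counted against each $y_j$-path, and $Q_x\{\ga_n>-x\}$ from the top-level drift. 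Continuity of the derivative away from the $y_j$ then follows from continuity of $x\mapsto Q_x^\be\{\ga_n>-x\}$ and dominated convergence.
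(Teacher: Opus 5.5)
Your proposal has a genuine gap. Neither of your two routes produces the kernel $Q_x^\be\{\ga_n>-x\}-\ind_{\{x>y_j\}}$, and the step meant to produce it (the telescoping over intermediate levels) is false for fixed paths.

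\textbf{The reduction.} The covariance formula gives $\EE\langle D\varphi,-DL^{-1}F_x^\be\rangle_{\fH}$, and $-DL^{-1}F_x^\be\neq DF_x^\be$ because $F_x^\be$ is not in the first chaos. In the Ornstein--Uhlenbeck representation, the $x$-replica lives in the interpolated environment $e^{-t}W+\sqrt{1-e^{-2t}}\,W'$, while the $y_j$-replica lives in $W$. That destroys the structure needed below. Also, $x\mapsto F_x^\be$ is not differentiable: by \eqref{eq:first-jump-tail} (with $\eps=0$) it is $-B_0(x)$ plus a $C^1$ function, so some regularization is unavoidable. Your fallback does not help either. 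The shift $B_i(\cdot)\mapsto B_i(\cdot+h)-B_i(h)$ is a measure-preserving rotation of the Gaussian space, not a Cameron--Martin shift: a drift $\ind_{[0,x]}$ on $\xi_i$ adds a deterministic ramp to $B_i$ rather than translating it. Moreover, $F_{y_j-x}^\be$ has no $x$-derivative.

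The paper instead mollifies and writes $\partial_xF_x^{\be,\eps}=\int W(h_\ga)\,Q_x^{\be,\eps}(d\ga)$, where $h_\ga$ consists of signed mollified point masses at the segment endpoints $x+\ga_i$. It then integrates by parts in $\EE[\varphi(\mathbf F^\eps)\rho_x^\eps(\ga)W(h_\ga)]$ directly, with no $L^{-1}$. The terms in which $D$ hits the Gibbs density $\rho_x^\eps$ reduce to an expression that is antisymmetric under exchange of two i.i.d.\ $Q_x^{\be,\eps}$-replicas, so they vanish pathwise. This is where oddness of $\Psi^\eps$, i.e.\ evenness of $\phi$, is used. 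What survives is $\sum_j\EE[\partial_j\varphi(\mathbf F^\eps)\,\cK_{0,0}^{\be,\eps}(x,y_j)]$, with both replicas in the same environment (Lemma~\ref{lem:mollified-ibp}). Note that $\partial_x$ never falls on the Gibbs density, so this is not $\langle D\varphi,\partial_xDF_x\rangle_{\fH}$ either.

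\textbf{The kernel.} Put $u_i=x+\ga_i$ and $v_i=y+\eta_i$. For two fixed paths, the $x$-derivative of the overlap is the sum of the rectangular sign increments in \eqref{eq:covariance-kernel} plus $\frac12[\sgn(v_n-u_n)+\sgn(u_n)]$. It does not telescope to $\ind_{\{u_n>0\}}-\ind_{\{x>y\}}$. Already for $n=1$ the discrepancy is $\sgn(v_1-u_1)-\frac12\sgn(y-u_1)-\frac12\sgn(v_1-x)$, which equals $\sgn(v_1-u_1)=\pm1$ whenever $x<y<u_1$. It averages to zero only because, on $\{u_1>y\}$, the Gibbs weight factorizes at $y$. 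Hence $u_1$ has the law of $v_1$ under $Q_y^\be$ and is exchangeable with the independent $y$-replica.

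This switching identity (Lemma~2.2 of the companion paper, invoked in Lemma~\ref{lem:covariance-kernel-limit}) is the missing idea. It needs the two replicas to share one environment. It also applies only after the mollification is removed, via total-variation convergence of $Q^{\be,\eps}$ and a small-diagonal estimate that replaces $\Psi^\eps$ by $\frac12\sgn$. The $C^1$ claim then follows from uniform convergence of the mollified derivatives on compacts away from the $y_j$, not from differentiating $F_x^\be$ itself.
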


Here \(Q_x^{\be}\{\ga_n>-x\}\) is the quenched probability that the
terminal point \((x+\ga_n,n)\) of the polymer started from \((x,0)\) lies
to the right of \((0,n)\).  The level-zero identity was used in
\cite[Corollary~1.10]{RassoulAghaShenZhangZheng26} to identify the
horizontal Busemann process as a two-sided standard Brownian motion.  The
multilevel identity below contains the level-zero formula as a special case
and extends it to free energies based at positive levels, including the
spatial diagonal.  In the proof of the Burke property below, the Brownian
finite-dimensional distributions are recovered from the resulting joint
Stein identity. \cite[Lemma~C.2]{RassoulAghaShenZhangZheng26} is needed only
to pass to continuous processes.

\begin{theorem}[Multilevel free-energy identity]
\label{thm:multilevel-identity}
Fix \(m\geq1\), let \(\mathbf y=(y_1,\ldots,y_m)\in\R^m\) and
\(\mathbf k=(k_1,\ldots,k_m)\in\{0,\ldots,n\}^m\), and set
\begin{align*}
  \mathbf F_{\mathbf y,\mathbf k}^{\be}
  =\bigl(F_{(y_1,k_1),n}^{\be},\ldots,F_{(y_m,k_m),n}^{\be}\bigr).
\end{align*}
Suppose that \(\varphi\in C^1(\R^m)\) and
\(\|\nabla\varphi\|_\infty<\infty\).
Set \(\mathcal D_0=\{y_j:k_j=0\}\).  The map
\(x\mapsto\EE[\varphi(\mathbf F_{\mathbf y,\mathbf k}^{\be})F_x^\be]\)
is continuously differentiable on each connected component of
\(\R\setminus\mathcal D_0\).

\smallskip
\noindent\textup{(i) Off the diagonal.}
If \(x\ne y_j\) for every \(1\leq j\leq m\), then
\begin{align}\label{eq:multilevel-identity}
  \frac{\partial}{\partial x}
  \EE\left[\varphi(\mathbf F_{\mathbf y,\mathbf k}^{\be})F_x^\be\right]
  =
  \sum_{j=1}^{m}
  \EE\left[
    \partial_j\varphi(\mathbf F_{\mathbf y,\mathbf k}^{\be})
    \left\{
      Q_x^\be\{\ga_n>-x\}
      -Q_x^\be\{\ga_{k_j}>y_j-x\}
    \right\}
  \right].
\end{align}
If \(x\in\{y_1,\ldots,y_m\}\setminus\mathcal D_0\), the right-hand side
has a continuous extension to \(x\), and this extension equals the derivative
there.

\begin{samepage}
\smallskip
\noindent\textup{(ii) On the diagonal.}
Suppose that \(x=y_r\) for some \(1\leq r\leq m\).
For \(\eps>0\), let a superscript \(\eps\) denote the even mollification
of the Brownian environment introduced in \eqref{eq:mollification}, and set
\(\mathbf F_{\mathbf y,\mathbf k}^{\be,\eps}=
\bigl(F_{(y_1,k_1),n}^{\be,\eps},\ldots,
F_{(y_m,k_m),n}^{\be,\eps}\bigr)\).
Then
\begin{equation}\label{eq:multilevel-diagonal}
\begin{aligned}
  &\lim_{\eps\downarrow0}\EE\left[
    \varphi(\mathbf F_{\mathbf y,\mathbf k}^{\be,\eps})
    \left.\partial_sF_s^{\be,\eps}\right|_{s=x}\right]\\[-2pt]
  &\qquad=\sum_{j=1}^{m}
  \EE\left[
    \partial_j\varphi(\mathbf F_{\mathbf y,\mathbf k}^{\be})
    \left\{
      Q_x^\be\{\ga_n>-x\}
      -Q_x^\be\{\ga_{k_j}>y_j-x\}
      -\frac12\ind_{\{y_j=x,\,k_j=0\}}
    \right\}
  \right].
\end{aligned}
\end{equation}
\end{samepage}
\end{theorem}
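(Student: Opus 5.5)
The plan is Gaussian integration by parts on the Brownian environment, in the form used for Proposition~\ref{prop:level-zero-input}, after smoothing. Replace each \(B_i\) by its even mollification \(B_i^\eps\), so that every \(F^{\be,\eps}_{(y,k),n}\) is a smooth functional of finitely many Gaussian fields. Then \(\partial_x F_x^{\be,\eps}\) is an explicit quenched average of Brownian derivatives. Since \(H_x(\ga)\) depends on \(x\) only through the shifts \(x+\ga_i\), we have
\[
\partial_xH_x(\ga)=\dot B_n^\eps(x+\ga_n)+\sum_{i=0}^{n-1}\bigl(\dot B_i^\eps(x+\ga_{i+1})-\dot B_i^\eps(x+\ga_i)\bigr),
\]
and \(\partial_xF_x^{\be,\eps}=\int\partial_xH_x\,dQ_x^{\be,\eps}\).

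We then compute \(\EE[\varphi(\mathbf F^{\be,\eps}_{\mathbf y,\mathbf k})\,\partial_xF^{\be,\eps}_x]\) by Gaussian integration by parts, moving each \(\dot B_i^\eps(u)\) onto \(\varphi\) and onto the quenched density. The term that comes from differentiating the quenched measure \(Q_x^{\be,\eps}\) produces a covariance-type quantity. We expect it to vanish, as in the level-zero case, by stationarity of the point-to-line reference measure: the shift \(\ga\mapsto\ga+c\) together with the exponential weight \(e^{-\be\ga_n}\) makes the relevant telescoping sum integrate to zero. What remains is
\[
\sum_j\EE\Bigl[\partial_j\varphi\cdot\int Q_x^{\be,\eps}(d\ga)\int Q^{\be,\eps}_{(y_j,k_j),n}(d\ga')\;\mathrm{Cov}\bigl(\partial_xH_x(\ga),\,H_{(y_j,k_j),n}(\ga')\bigr)\Bigr].
\]

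The covariance between the derivative of one Hamiltonian and another Hamiltonian is a sum over shared levels \(i\) of terms \(\rho_\eps(x+\ga_\cdot-y_j-\ga'_\cdot)\) with signs, where \(\rho_\eps\) is a mollified delta, i.e.\ derivatives of mollified \(\tfrac12|u-v|\)-type kernels. As \(\eps\to0\) these become indicator functions of the relative order of \(x+\ga_i\) and \(y_j+\ga'_i\).

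We sum over levels \(i\ge k_j\) where both paths live. The key combinatorial step is that the contributions telescope, and only two boundary terms survive. The top level gives \(\ind\{x+\ga_n>0\}\), using the top term \(B_n(x+\ga_n)\) against \(B_n\) of the second path, and this produces \(Q_x\{\ga_n>-x\}\) as in the level-zero case. The starting level \(k_j\) of the second path, where it begins at the fixed point \(y_j\), gives \(\ind\{x+\ga_{k_j}>y_j\}\), which produces \(-Q_x\{\ga_{k_j}>y_j-x\}\). For \(k_j=0\), \(\ga_0=0\) and this reduces to \(\ind_{\{x>y_j\}}\), matching \eqref{eq:level-zero-input}.

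Interior crossings cancel because at each level \(i\) with \(k_j<i\le n\) the entry point of one path into level \(i\) is the exit point from level \(i-1\), with opposite signs. The integration over \(\ga'\) then drops out, since the resulting integrand depends only on \(\ga\). We carry out this telescoping at the level of the sign kernels \(\tfrac12\sgn_\eps\) rather than indicators, which makes the cancellation exact for each \(\eps>0\).

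Next we let \(\eps\to0\). Off the diagonal, for \(x\neq y_j\), the sign kernel \(\sgn_\eps(x+\ga_{k_j}-y_j)\) converges to \(\sgn\) except on a set of \(\ga\) of \(Q_x\)-probability zero. This holds because \(\ga_{k_j}\) has a density for \(k_j\ge1\), and for \(k_j=0\) because \(x\ne y_j\). We also have \(\sgn=2\ind-1\), and the constants cancel between the two boundary terms. This gives \eqref{eq:multilevel-identity} for the mollified derivative. Convergence of the free energies and polymer measures as \(\eps\to0\), with moment bounds from \cite[Lemma~A.2]{RassoulAghaShenZhangZheng26}, together with the bound \(\|\nabla\varphi\|_\infty<\infty\), lets us pass to the limit by dominated convergence.

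The derivative statement follows by integrating the smoothed identity in \(x\) and taking the limit. We get continuous differentiability on components of \(\R\setminus\mathcal D_0\), because the right-hand side is continuous in \(x\) there: \(\ga_{k_j}\) has an absolutely continuous quenched law when \(k_j\ge1\), so crossing \(y_j\) causes no jump. The only discontinuities come from the indicators \(\ind_{\{x>y_j\}}\) with \(k_j=0\).

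On the diagonal, \(x=y_r\) with \(k_r=0\), the even mollification gives \(\sgn_\eps(0)=0\). This is the average of the two one-sided limits, which yields the extra \(-\tfrac12\) in \eqref{eq:multilevel-diagonal}.

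We expect the main obstacle to be the exact telescoping of the covariance terms across levels, together with the vanishing of the measure-derivative term for general starting levels. That term is not obviously zero when the second path starts at level \(k_j>0\). We would first try to verify it by a direct change of variables \(\ga_i\mapsto\ga_i+c\) in the reference measure \(\mu_\be\), which is where the point-to-line structure enters.
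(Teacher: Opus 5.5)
Your overall plan (mollify, integrate by parts, discard the Gibbs-density term, identify the limit of the pairing kernel, remove the mollification, treat coincidences) follows the paper's. However, the step you call the key combinatorial one is false as stated.

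\textbf{The pairing kernel does not telescope pathwise.} The sum of the rectangular increments over the shared levels $k_j\le i\le n-1$, plus the terminal cross term $\Psi^\eps(v_n-u_n)$, does not reduce pathwise to the level-$k_j$ boundary term. This fails for $\eps>0$ and also for the limiting sign kernel. Take $n=1$, $k=0$, $x<y$, $u_i=x+\ga_i$, $v_i=y+\eta_i$. In the sign limit, the non-anchored part of the integrand is $\tfrac12\bigl[2\sgn(v_1-u_1)-\sgn(v_0-u_1)-\sgn(v_1-u_0)+\sgn(v_0-u_0)\bigr]$. It equals $\tfrac12=\tfrac12\sgn(y-x)$ when $u_1<v_0$. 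It equals $\tfrac12+\sgn(v_1-u_1)$ when $u_1>v_0$, i.e.\ when the two paths overlap on level $0$.

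The excess vanishes only after integrating against $Q_y^\be\otimes Q_x^\be$. On $\{u_1>v_0\}$, the swap $u_1\leftrightarrow v_1$ is an admissible involution. It preserves the product of the two Gibbs weights and of the Poisson reference densities, and it flips $\sgn(v_1-u_1)$. This is the path-switching identity \cite[Lemma~2.2, Equation~(2.14)]{RassoulAghaShenZhangZheng26}, on which Lemma~\ref{lem:covariance-kernel-limit} rests. It uses that both replicas live in the same environment, and it cannot be replaced by matching entry and exit points.

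\textbf{What else the kernel step needs.} For $k_j>0$ you also need a step you never state. Conditional on $(\ga_1,\ldots,\ga_{k_j})$, the upper tail of the reference polymer has law $Q^\be_{(x+\ga_{k_j},k_j),n}$, by memorylessness of $\mu_\be$ and factorization of the Gibbs weight. Only then do both tails start on the same level, so that switching applies after relabeling and yields $\tfrac12\sgn(y_j-x-\ga_{k_j})$. Switching is exact only for the sign kernel, not for $\Psi^\eps$. So the paper first replaces $\Psi^\eps$ by $\tfrac12\sgn$ and $Q^{\be,\eps}$ by $Q^\be$, using total-variation convergence and a small-diagonal estimate, and only then identifies the limit. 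Your claimed exactness for each $\eps>0$ is not available. The later steps you describe (bounded convergence in the integrated identity, continuity across positive-level coincidences from the absence of atoms) are fine.

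\textbf{The Gibbs-density term.} Here you reach the right conclusion for the wrong reason. That term involves only the reference polymer at $x$. The test functional enters only as the multiplicative factor $\varphi(\mathbf F^\eps)$, so nothing changes when $k_j>0$. The term does not vanish by a shift of $\mu_\be$, which is not even an operation on $\DN$ with $\ga_0=0$ fixed. It vanishes pathwise for a different reason. After convolving in $z$, it splits into two pieces. The first is a self-pairing, which is zero by oddness of $\Psi^\eps$. The second is a two-replica integral of $\Xi_{i,\eps}$ against $Q_x^{\be,\eps}\otimes Q_x^{\be,\eps}$ (or of $\Psi^\eps(u_n'-u_n)$ on the terminal level). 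It is zero because the integrand is antisymmetric under exchange of replicas. These facts are \cite[Lemmas~B.1--B.2]{RassoulAghaShenZhangZheng26}.

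\textbf{The level-zero diagonal.} The half weight there comes from the same exchange antisymmetry, not from $\Psi^\eps(0)=0$ alone. When $y_j=x$ and $k_j=0$, the two measures coincide. Every term of $\cK^{\be,\eps}_{0,0}(x,x)$ except the anchored term $\Psi^\eps(x+\ga_n)$ then cancels exactly.
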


\begin{remark}
	For \(k_j=0\), \(Q_x^\be\{\ga_0>y_j-x\}=\ind_{\{x>y_j\}},\)
	so part~(i) reduces to Proposition~\ref{prop:level-zero-input} when
	\(k_1=\cdots=k_m=0\). 
	If \(x\in\{y_1,\ldots,y_m\}\setminus\mathcal D_0\), then every \(j\)
	with \(y_j=x\) has \(k_j>0\). Since the reference polymer satisfies
	\(\ga_k>0\) for every \(k>0\), \(Q_x^\be\{\ga_{k_j}>y_j-x\}=
	Q_x^\be\{\ga_{k_j}>0\}=1.\)
	Thus the right-hand side in part~(i) extends continuously through such
	positive-level coincidences and gives the ordinary derivative there.
	
	At a general coincidence \(y_j=x\), the total subtracted entrance term
	in \eqref{eq:multilevel-diagonal} is
	\[
	Q_x^\be\{\ga_{k_j}>0\}
	+\frac12\ind_{\{k_j=0\}}
	=
	\begin{cases}
		1, & k_j>0,\\[2pt]
		\frac12, & k_j=0.
	\end{cases}
	\]
	Hence, if \(k_j>0\) for every \(j\) with \(y_j=x\), the limit in
	part~(ii) agrees with the ordinary derivative from part~(i). At a
	level-zero coincidence, even mollification instead gives the symmetric
	half-weight appearing in
	Proposition~\ref{prop:Gaussian-Gamma-Stein}.
\end{remark}

\begin{remark}[Variance and polymer geometry]
The diagonal case of Theorem~\ref{thm:multilevel-identity} gives a local
variance identity.  Taking \(m=1\), \(k_1=0\), \(y_1=x\), and
\(\varphi(z)=z\), and using that \(\EE F_x^\be\) is independent of \(x\),
gives
\begin{align}\label{eq:variance-terminal}
	\frac{d}{dx}\Var(F_x^\be)
	=
	2\EE\left[Q_x^\be\{\ga_n>-x\}\right]-1.
\end{align}
Thus the spatial derivative of the free-energy variance is determined by
the annealed distribution of the terminal polymer location.

This is the differential counterpart of the variance identity of
Sepp\"al\"ainen and Valk\'o \cite[Theorem~3.6]{SV10}.  In their stationary
boundary formulation,
\begin{align}
	\Var(\log Z_n^\theta(t))
	=
	n\Psi_1(\theta)-t+2E_{n,t}^\theta(\sigma_0^+),
\end{align}
where \(\Psi_1\) is the trigamma function and \(\sigma_0\) is the boundary
exit point.  The trigamma term comes from the vertical Burke increments:
with \(\theta=\be^{-1}\), Corollary~\ref{cor:Burke} gives \(\Var(\log G_i)=\Psi_1(\be^{-1}).\)
Hence \(n\Psi_1(\be^{-1})\) is the variance accumulated by the vertical
log-Gamma increments, while \(t\) is the horizontal Brownian variance.
This term is independent of the spatial variable and therefore disappears
after differentiation.

Under the reversal between the stationary point-to-line and boundary
formulations, the terminal-location probability in
\eqref{eq:variance-terminal} corresponds to the exit-point probability in
the differentiated Sepp\"al\"ainen--Valk\'o identity.  Thus their variance
formula and \eqref{eq:variance-terminal} express the same fluctuation--geometry
relation at integrated and differential levels, respectively.
\end{remark}

\subsection{The joint Gaussian--Gamma Stein identity}
The multilevel identity enters the Burke argument through the lowest corner
\((x_1,1)\).  The next proposition separates the variables created at this
corner from those carried by the part of the path above it.  This is the
induction step that allows the corners to be removed one at a time.  We begin
by collecting the variables that remain after the lowest corner is removed.

For the path fixed in
Section~\ref{subsec:path-variables}, write
\[
  I_0=I_n=[0,\infty),\qquad
  I_k=[0,x_k-x_{k+1}],\quad 1\leq k<n.
\]
At the corner \((x_1,1)\), fix integers \(p,q\geq0\), with \(q\leq n-1\).
For
\(1\leq r\leq p\), choose \(k_r\in\{1,\ldots,n\}\) and
\(s_r<t_r\) in \(I_{k_r}\), and, for \(1\leq r\leq q\), choose distinct
\(\ell_r\in\{2,\ldots,n\}\).  Set
\begin{align}
  \overrightarrow{\mathcal B}
  &=\bigl(\cB_{k_r}^\be(t_r)-\cB_{k_r}^\be(s_r)\bigr)_{r=1}^p,\notag\\
  \mathbf G^\uparrow&=(G_{\ell_r})_{r=1}^q,\qquad
  U=(\overrightarrow{\mathcal B},\mathbf G^\uparrow)
  \in E=\R^p\times(0,\infty)^q.
  \label{eq:upper-path-variables}
\end{align}
By construction,
\begin{align}\label{eq:upper-environment-measurability}
  \sigma(U)\subset\sigma(B_1,\ldots,B_n).
\end{align}

\begin{proposition}[One-corner Gaussian--Gamma Stein identity]
\label{prop:Gaussian-Gamma-Stein}
Fix \(m\geq0\).  Put \(a_0=x_1\), and, if
\(m\geq1\), choose \(a_0<a_1<\cdots<a_m\).  Set
\[
  X_j=F_{a_j}^\be-F_{a_{j-1}}^\be,\qquad
  \mathbf X=(X_1,\ldots,X_m),\qquad G=G_1.
\]
For every
\(\Phi\in C_c^1(E\times\R^m\times(0,\infty))\),
\begin{align}
  &\EE\left[X_j\Phi(U,\mathbf X,G)\right]
  =(a_j-a_{j-1})
  \EE\left[\partial_{x_j}\Phi(U,\mathbf X,G)\right],
  \qquad 1\leq j\leq m,
  \label{eq:global-Gaussian-Stein}\\*
  &\EE\left[
    G\partial_g\Phi(U,\mathbf X,G)
    +\left(\frac1\be-\frac{G}{\be^2}\right)
    \Phi(U,\mathbf X,G)\right]=0.
  \label{eq:global-Gamma-Stein}
\end{align}
For \(m=0\), only \eqref{eq:global-Gamma-Stein} is asserted.
\end{proposition}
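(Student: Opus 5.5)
The plan is to realize every entry of \((U,\mathbf X,G)\) as a function of finitely many free energies \(F_{(y,k),n}^\be\), and then to read both identities off Theorem~\ref{thm:multilevel-identity}: \eqref{eq:global-Gaussian-Stein} from part~(i) integrated in \(x\), and \eqref{eq:global-Gamma-Stein} from part~(ii) at the corner \(x=x_1\).

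\emph{Encoding.} Concretely, \(U\) consists of differences of level-\(k\) free energies (\(k\geq1\)) at points of the upper path, together with \(\log G_\ell=\be(F_{(x_\ell,\ell),n}-F_{(x_\ell,\ell-1),n})\). Next, \(X_j=F_{a_j}-F_{a_{j-1}}\), and \(G=\exp\{\be(F_{(x_1,1),n}-F_{x_1})\}\). So \(\Phi(U,\mathbf X,G)=\varphi(\mathbf F_{\mathbf y,\mathbf k}^\be)\) for an explicit \(\varphi\) built from \(\Phi\), differences and exponentials. Since \(\Phi\) has compact support in \(E\times\R^m\times(0,\infty)\), the exponentials are evaluated only on compacts, so \(\varphi\in C^1\) with \(\|\nabla\varphi\|_\infty<\infty\). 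The key structural fact is that \(\varphi\) depends only on differences, so \(\sum_j\partial_j\varphi\equiv0\). This kills the common term \(Q_x^\be\{\ga_n>-x\}\) in \eqref{eq:multilevel-identity} and \eqref{eq:multilevel-diagonal}. By the chain rule, \(\sum_{i\geq j}\partial_{F_{a_i}}\varphi=\partial_{x_j}\Phi\) for \(j\geq1\), and \(\partial_{F_{x_1}}\varphi=-\partial_{x_1}\Phi-\be G\partial_g\Phi\), where the first term is absent if \(m=0\).

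\emph{Gaussian identity.} Here \(x\in(a_{j-1},a_j)\), so \(x>x_1\). Every positive-level point \(y\) of the upper path satisfies \(y\leq x_1<x\), whence \(Q_x^\be\{\ga_{k}>y-x\}=1\). At level zero, the term is \(\ind_{\{x>y\}}\). By part~(i) and \(\sum\partial\varphi=0\),
\[
\partial_x\EE[\varphi F_x^\be]=-\sum_{\text{upper, and level }0\text{ with }y<x}\EE[\partial\varphi]=\sum_{i\geq j}\EE[\partial_{F_{a_i}}\varphi]=\EE[\partial_{x_j}\Phi].
\]
The map \(x\mapsto\EE[\varphi F_x^\be]\) is continuous (via \(L^2\)-continuity of \(F_x^\be\)) and is \(C^1\) off \(\{a_i\}\). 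Integrating from \(a_{j-1}\) to \(a_j\) therefore gives \eqref{eq:global-Gaussian-Stein}.

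\emph{Gamma identity.} Apply part~(ii) at \(x=x_1=a_0\). The level-one coordinate at \(x_1\) and all upper points have \(Q\)-weight \(1\). The level-zero point \(x_1\) has weight \(\tfrac12\), and the points \(a_i>x_1\) have weight \(0\). Using \(\sum\partial\varphi=0\), the right side becomes
\[
\EE\bigl[\partial_{x_1}\Phi+\tfrac12\partial_{F_{x_1}}\varphi\bigr]=\EE\bigl[\tfrac12\partial_{x_1}\Phi-\tfrac\be2G\partial_g\Phi\bigr],
\]
with the \(\partial_{x_1}\Phi\) terms simply absent if \(m=0\). For the left side, split off the first horizontal segment:
\[
Z_x^\be=\int_0^\infty e^{-\be u}e^{\be(B_0(x+u)-B_0(x))}Z_{(x+u,1),n}^\be\,du.
\]
In the mollified environment this gives
\[
\partial_sF_s^{\be,\eps}\big|_{s=x_1}=1-G^\eps/\be-(B_0^\eps)'(x_1).
\]
Gaussian integration by parts in \(B_0\) handles \(\EE[\varphi\,(B_0^\eps)'(x_1)]\). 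Only level-zero free energies depend on \(B_0\). The Malliavin derivative of \(F_a^\be\) in \(B_0\) is \(Q_a^\be\{a<t<a+\ga_1\}\), and pairing it with the even mollifier centered at \(x_1\) yields \(\tfrac12\) for \(a=x_1\) and \(0\) for \(a>x_1\) in the limit. Hence \(\EE[\varphi(B_0^\eps)'(x_1)]\to\tfrac12\EE[\partial_{F_{x_1}}\varphi]\). Equating the two sides, the \(\partial_{x_1}\Phi\) terms cancel and we get \(\EE[\Phi(1-G/\be)]+\be\EE[G\partial_g\Phi]=0\). Dividing by \(\be\) gives \eqref{eq:global-Gamma-Stein}.

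\emph{Main obstacle.} I expect the hardest step to be the rigorous \(\eps\downarrow0\) limit on the left side of part~(ii). This requires convergence of \(\varphi(\mathbf F^{\be,\eps})\), \(G^\eps\) and of the Malliavin pairings, uniform integrability, and the exact half-weight from the mollified Brownian derivative at the start point of the level-zero polymer. I would handle it with the same mollification estimates used in the proof of Theorem~\ref{thm:multilevel-identity}(ii), treating the first-segment decomposition at the mollified level before passing to the limit.
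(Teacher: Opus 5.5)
Your proposal is correct and follows essentially the same route as the paper. You encode $(U,\mathbf X,G)$ through the vertex free energies with $\sum_j\partial_j\varphi=0$ and integrate part~(i) of Theorem~\ref{thm:multilevel-identity} for the Gaussian identity; for the Gamma identity you combine part~(ii) at $x_1$ with the half-weight from the level-zero white-noise pairing and the first-jump formula $\partial_xF_x^{\be,\eps}=1-G^\eps/\be-\xi_0^\eps(x)$. The only minor difference is that you treat $m=0$ directly, which is valid because $F_{x_1}$ still enters through $G$, whereas the paper reduces $m=0$ to $m=1$ by a cutoff.
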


\begin{remark}\label{rem:Stein-operators}
With \(\sigma_j^2=a_j-a_{j-1}\), the operators in
\eqref{eq:global-Gaussian-Stein}--\eqref{eq:global-Gamma-Stein} are
\[
  \Phi\longmapsto\sigma_j^2\partial_{x_j}\Phi-x_j\Phi,\qquad
  \Phi\longmapsto g\partial_g\Phi+(\be^{-1}-\be^{-2}g)\Phi.
\]
They characterize \(N(0,\sigma_{j}^2)\) and
\(\operatorname{Gamma}(\be^{-1},\textup{rate }\be^{-2})\), respectively;
see \cite[Lemma~1]{Mec09} and \cite[Section~2]{DP18}.  
The dependence of \(\Phi\) on \(U\) turns
\eqref{eq:global-Gaussian-Stein}--\eqref{eq:global-Gamma-Stein} into
conditional Stein identities.  In particular,
\[
\mathcal L\left(
X_j\,\middle|\,
\sigma\bigl(U,G,(X_\ell)_{\ell\ne j}\bigr)\right)
=N(0,a_j-a_{j-1}),
\]
and
\[
\mathcal L\left(G\,\middle|\,\sigma(U,\mathbf X)\right)
=
\operatorname{Gamma}
\bigl(\be^{-1},\textup{rate }\be^{-2}\bigr)
\qquad\text{a.s.}
\]
Applying the conditional identities first to smooth compactly supported
product tests and then using a standard approximation and monotone-class
argument gives the corresponding factorization for bounded Borel tests.
Consequently, for bounded Borel functions
\(h:E\to\R\), \(f_j:\R\to\R\), and
\(\psi:(0,\infty)\to\R\),
\begin{align}\label{eq:one-corner-product-law}
	\EE\left[
	h(U)\psi(G)\prod_{j=1}^m f_j(X_j)\right]
	=
	\EE[h(U)]\,\EE[\psi(G)]
	\prod_{j=1}^m\EE[f_j(X_j)].
\end{align}
The product is interpreted as \(1\) when \(m=0\).  Thus
\(X_1,\ldots,X_m,G\) are mutually independent and are independent of
\(U\), with
\[
X_j\sim N(0,a_j-a_{j-1}),\qquad
G\sim\operatorname{Gamma}
\bigl(\be^{-1},\textup{rate }\be^{-2}\bigr).
\]
\end{remark}

\begin{corollary}[Burke property]
\label{cor:Burke}
Fix \(n\geq1\), \(\be>0\), and
\(x_n\leq\cdots\leq x_1\).  The processes
\(\{\cB_k^\be:0\leq k\leq n\}\) and the variables
\(G_i=G_{x_i,i}^{\be}\), \(1\leq i\leq n\), are mutually independent.
Every \(\cB_k^\be\) is a standard Brownian motion on
its parameter interval, and
\begin{align}\label{eq:gamma-law}
  G_i
  \sim
  \operatorname{Gamma}
  \bigl(\be^{-1},\textup{rate }\be^{-2}\bigr),
  \qquad 1\leq i\leq n.
\end{align}
Equivalently, the vertical free-energy increments
\(\cV_{x_i}^{\be}(i)=\be^{-1}\log G_i\) are independent scaled log-gamma
variables and are independent of all
horizontal free-energy increments on the down-right path.
\end{corollary}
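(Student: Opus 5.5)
The proof is an induction on the number of levels \(n\) that peels off the lowest corner using Proposition~\ref{prop:Gaussian-Gamma-Stein} and Remark~\ref{rem:Stein-operators}. The key structural point is that every variable attached to the part of the path above level one, namely \(\cB_k^\be\) for \(1\leq k\leq n\) and \(G_i\) for \(2\leq i\leq n\), is a functional of the free energies \(F_{(\cdot,k),n}^\be\) with \(k\geq1\). These free energies are exactly the stationary polymer with \(n-1\) levels in the shifted environment \((B_1,\ldots,B_n)\). That is, relabeling levels \(k\mapsto k-1\) identifies \(F_{(y,k),n}^\be\) with the level-\((k-1)\) free energy of an \((n-1)\)-level model driven by \((B_1,\ldots,B_n)\), which has the same law as \((B_0,\ldots,B_{n-1})\). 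The path \(x_n\leq\cdots\leq x_2\) is a down-right path for that model, and its bottom process is \(s\mapsto F_{(x_2+s,1),n}^\be-F_{(x_2,1),n}^\be\), \(s\geq0\). Restricted to \(s\in I_1\), this is \(\cB_1^\be\).

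The induction hypothesis, applied to this \((n-1)\)-level model, gives the Burke property for \((\cB_1^\be|_{I_1},\cB_2^\be,\ldots,\cB_n^\be,G_2,\ldots,G_n)\). The base case \(n=1\) (or even \(n=0\), where only \(\cB_0=B_0\) is Brownian) is covered by the same corner argument with \(U\) built only from \(\cB_1=B_1\) increments. Next I apply \eqref{eq:one-corner-product-law}, taking \(U\) to be an arbitrary finite collection of increments of \(\cB_1,\ldots,\cB_n\) together with \(G_2,\ldots,G_n\), with \(a_0=x_1<a_1<\cdots<a_m\) arbitrary. This shows that the increments \(X_j\) of \(\cB_0^\be\) over a partition, and \(G=G_1\), are mutually independent, independent of \(U\), with \(X_j\sim N(0,a_j-a_{j-1})\) and \(G_1\sim\operatorname{Gamma}(\be^{-1},\text{rate }\be^{-2})\). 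Combining with the induction hypothesis on the law of \(U\) yields the joint law of all finite-dimensional increment vectors together with all \(G_i\): a product of independent Gaussian increments and independent Gamma variables.

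It remains to upgrade from finite-dimensional distributions to processes. The finite-dimensional laws of each \(\cB_k^\be\) coincide with those of a standard Brownian motion started at \(0\) on \(I_k\), since \(\cB_k^\be(0)=0\) by definition. Continuity of \(s\mapsto F_{(s,k),n}^\be\), for instance via \cite[Lemma~C.2]{RassoulAghaShenZhangZheng26} or its direct analogue at level \(k\), which I would verify by the same argument since each level-\(k\) free energy is a level-zero free energy in a shifted environment, then identifies each \(\cB_k^\be\) as a Brownian motion. Mutual independence of the processes and the \(G_i\) follows because independence of finite-dimensional marginals determines independence of the generated \(\sigma\)-fields by a \(\pi\)-\(\lambda\) argument. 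The statement about \(\cV_{x_i}^\be(i)=\be^{-1}\log G_i\) is then immediate.

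The main obstacle is the bookkeeping in the inductive step. Specifically, one must check that the \((n-1)\)-level model in the shifted environment really reproduces the variables \(\cB_k^\be\) and \(G_i\) for \(k,i\geq2\) with the same definitions \eqref{eq:top-process}--\eqref{eq:path-G}, including the reference measure \(\mu_\be^{k,n}\), and that the \(\mathbf G^\uparrow\) in Proposition~\ref{prop:Gaussian-Gamma-Stein} allows all of \(G_2,\ldots,G_n\) (which needs \(q=n-1\), permitted). If the level-shift identification is delicate, the fallback is to avoid induction altogether. In that version one iterates Proposition~\ref{prop:Gaussian-Gamma-Stein} directly, applied at each successive corner of truncated paths. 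This works because the proposition as stated only uses level-one data at the corner and treats the upper variables abstractly through \eqref{eq:upper-environment-measurability}, so its proof should transfer verbatim to the corner \((x_k,k)\) for the environment \((B_{k-1},\ldots,B_n)\).
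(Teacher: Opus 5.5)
Your proposal is correct and follows essentially the paper's route. The paper also argues by induction on \(n\): it applies the hypothesis to the shifted environment \((B_1,\ldots,B_n)\) with corners \(x_n\leq\cdots\leq x_2\), then uses \eqref{eq:one-corner-product-law} with \(U\) an arbitrary finite collection of increments of \(\cB_1^\be,\ldots,\cB_n^\be\) and ratios \(G_2,\ldots,G_n\), and finally passes from finite-dimensional laws to processes by continuity and a countable-dense-set argument. Your fallback of re-proving the proposition at each corner is unnecessary, since the level-shift identification you describe is exactly what the paper uses.
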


\begin{proof}
We prove the finite-dimensional assertion by induction on \(n\).  For
\(n=1\),
\[
\cB_1^\be(s)=B_1(x_1)-B_1(x_1-s),\qquad s\geq0,
\]
is a standard Brownian motion.  For
\(x_1=a_0<a_1<\cdots<a_m\),
\[
X_j
=F_{a_j}^\be-F_{a_{j-1}}^\be
=\cB_0^\be(a_j-x_1)-\cB_0^\be(a_{j-1}-x_1),
\qquad 1\leq j\leq m.
\]
Taking \(U\) to be an arbitrary finite collection of increments of
\(\cB_1^\be\), Proposition~\ref{prop:Gaussian-Gamma-Stein} gives the
finite-dimensional product law of
\(\cB_0^\be,\cB_1^\be\), and \(G_1\), with the stated marginal laws.

Assume that the assertion holds at terminal level \(n-1\).  Applied to
the environment \((B_1,\ldots,B_n)\), the points
\(x_n\leq\cdots\leq x_2\), and the level indices decreased by one, the
induction hypothesis gives the finite-dimensional product law of \(\cB_1^\be,\ldots,\cB_n^\be, G_2,\ldots,G_n\)
on their present parameter intervals.  Let \(U\) be any finite
collection of increments and ratios from this family, and choose
\(x_1=a_0<a_1<\cdots<a_m\).  Proposition
\ref{prop:Gaussian-Gamma-Stein} gives
\[
\EE\left[
h(U)\psi(G_1)\prod_{j=1}^m f_j(X_j)\right]
=
\EE[h(U)]\,\EE[\psi(G_1)]
\prod_{j=1}^m\EE[f_j(X_j)],
\]
where \(X_j=\cB_0^\be(a_j-x_1)-\cB_0^\be(a_{j-1}-x_1).\)
Since \(U\) and \(a_1,\ldots,a_m\) are arbitrary, the
finite-dimensional assertion follows at terminal level \(n\).

For \(k<n\), continuity of \(\cB_k^\be\) follows from
\cite[Lemma~C.2]{RassoulAghaShenZhangZheng26}; for \(k=n\), it follows
from \eqref{eq:top-process}.  Since \(\cB_k^\be(0)=0\), the preceding
finite-dimensional distributions identify every \(\cB_k^\be\) as a
standard Brownian motion on \(I_k\).  If \(D_k\) is a countable dense
subset of \(I_k\), then \(\sigma(\cB_k^\be)=\sigma\{\cB_k^\be(t):t\in D_k\}.\)
The finite-dimensional product law on \(D_0,\ldots,D_n\) therefore gives
the asserted process-level mutual independence.

\end{proof}

\subsection{Zero-temperature limit}
We finish this section by passing it to zero temperature.  The \(\OCY\) polymer converges to
Brownian last-passage percolation (BLPP): the free energy converges to the
last-passage value, while the quenched polymer measure concentrates on
maximizing paths.  We refer to
\cite[Section~2.8]{RassoulAghaShenZhangZheng26} for the corresponding
convergence results.

For \(0\leq k\leq n\), put
\(\overline\Delta_{k,n}=\{0=\ga_k\leq\ga_{k+1}\leq\cdots\leq\ga_n\}\)
and define the stationary point-to-line Brownian last-passage value
\begin{align}\label{eq:BLPP-value}
	&L_{(x,k),n}=\sup_{\ga\in\overline\Delta_{k,n}}
	\left\{B_n(x+\ga_n)+\sum_{j=k}^{n-1}
	\bigl[B_j(x+\ga_{j+1})-B_j(x+\ga_j)\bigr]-\ga_n\right\}.
\end{align}
The sum is empty for \(k=n\), so \(L_{(x,n),n}=B_n(x)\).
Given \(x_n\leq\cdots\leq x_1\), define
\(\cB_n^\infty,\ldots,\cB_0^\infty\) by
\eqref{eq:top-process}--\eqref{eq:bottom-process}, with
\(F_{(x,k),n}^{\be}\) replaced by \(L_{(x,k),n}\), and put
\begin{align}\label{eq:BLPP-vertical}
	&V_i^\infty=L_{(x_i,i),n}-L_{(x_i,i-1),n},
	\qquad 1\leq i\leq n.
\end{align}

\begin{corollary}[Burke property for stationary Brownian LPP]
	\label{cor:BLPP-Burke}
	Fix \(n\geq1\) and \(x_n\leq\cdots\leq x_1\).
	The processes \(\cB_n^\infty,\ldots,\cB_0^\infty\) and the variables
	\(V_1^\infty,\ldots,V_n^\infty\) are mutually independent.  Each
	\(\cB_k^\infty\) is a standard Brownian motion on its parameter interval,
	and \(-V_i^\infty\sim\operatorname{Exp}(1)\), \(1\leq i\leq n\).
\end{corollary}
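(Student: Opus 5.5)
We will derive Corollary~\ref{cor:BLPP-Burke} from Corollary~\ref{cor:Burke} by a zero-temperature limit in distribution, letting \(\be\to\infty\). The scaling needs to be matched first. With the reference measure \(\mu_\be\) and the prefactor \(\be^{-(n-k)}\), we have
\[
\be^{-(n-k)}e^{\be H}\mu_\be^{k,n}(d\ga)=e^{\be(H-\ga_n)}\,d\ga_{k+1}\cdots d\ga_n .
\]
Hence \(F_{(x,k),n}^\be=\be^{-1}\log\int_{\Delta_{k,n}}e^{\be(H_{(x,k),n}(\ga)-\ga_n)}d\ga\). By the Laplace principle this converges almost surely to \(L_{(x,k),n}\), for each fixed \((x,k)\). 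This is the convergence recorded in \cite[Section~2.8]{RassoulAghaShenZhangZheng26}, which we invoke directly. The volume factor contributes \(O(\be^{-1}\log\be)\) and vanishes in the limit. Continuity of \(B\) together with the linear penalty \(-\ga_n\) against sublinear Brownian growth gives the lower and upper bounds.

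\textbf{Step 1 (finite-dimensional limits).} Consider any finite collection of horizontal increments \(\cB_k^\be(t_r)-\cB_k^\be(s_r)\) and the vertical increments \(\cV_{x_i}^\be(i)=\be^{-1}\log G_i\). Each is a finite linear combination of finitely many values \(F_{(y,k),n}^\be\). Therefore the whole vector converges almost surely to the corresponding vector built from \(L\), namely the \(\cB_k^\infty\) increments and \(V_i^\infty\).

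\textbf{Step 2 (limit of the laws).} By Corollary~\ref{cor:Burke}, for every \(\be\) the prelimit vector has independent coordinates. Its law is exactly Gaussian for the horizontal increments, with the same variance \(t_r-s_r\) for every \(\be\). For the vertical increments, \(\cV^\be=\be^{-1}\log G\) with \(G\sim\operatorname{Gamma}(\be^{-1},\text{rate }\be^{-2})\). We compute the limit of \(\be^{-1}\log G\) through the Laplace transform
\[
\EE e^{-\lambda\be\cdot\be^{-1}\log G}=\EE G^{-\lambda}=\be^{2\lambda}\frac{\Gamma(\be^{-1}-\lambda)}{\Gamma(\be^{-1})}.
\]
A scaling that makes this tractable is \(\EE e^{\theta\cV^\be}=\EE G^{\theta/\be}=\be^{2\theta/\be}\Gamma(\be^{-1}+\theta/\be)/\Gamma(\be^{-1})\). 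Since \(\Gamma(\eps z)\sim 1/(\eps z)\) as \(\eps\to0\), this tends to \(1/(1+\theta)\) for \(\theta>-1\). That is the moment generating function of \(-\mathrm{Exp}(1)\). Independence and product laws pass to the limit because characteristic functions of products converge. This identifies the finite-dimensional laws of the limit: independent Gaussian increments for each \(\cB_k^\infty\), independence across \(k\), and \(-V_i^\infty\sim\operatorname{Exp}(1)\), independent of everything else.

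\textbf{Step 3 (process level).} Each \(\cB_k^\infty\) is continuous. For \(k=n\) it is a Brownian path. For \(k<n\), \(x\mapsto L_{(x,k),n}\) is continuous, being a supremum of continuous functions that is locally uniformly controlled by the moduli of continuity of the \(B_j\). As in the proof of Corollary~\ref{cor:Burke}, restricting to countable dense parameter sets upgrades the finite-dimensional statements to the process statements: each \(\cB_k^\infty\) is a standard Brownian motion, and the processes and the \(V_i^\infty\) are mutually independent.

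The main obstacle is Step 1: the almost sure convergence \(F_{(x,k),n}^\be\to L_{(x,k),n}\), in particular the uniform control of the unbounded path integral. We will simply cite \cite[Section~2.8]{RassoulAghaShenZhangZheng26}. If only level-zero convergence is provided there, we will repeat the argument for general \(k\); it is identical, since the level-\(k\) model is the level-zero model on the environment \((B_k,\ldots,B_n)\). The Gamma-to-exponential computation in Step 2 is routine.
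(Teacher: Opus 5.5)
Your proposal is correct and takes essentially the paper's route: you pass Corollary~\ref{cor:Burke} to the limit \(\be\to\infty\) through convergence of finitely many free energies to the last-passage values, identify the limit law of \(\be^{-1}\log G\) from \(\EE G^{\theta/\be}\to(1+\theta)^{-1}\), and upgrade to the process statement via countable dense meshes and continuity. The differences are minor and do not affect correctness. You use pointwise almost-sure Laplace-principle convergence and a direct deterministic continuity argument for \(x\mapsto L_{(x,k),n}\), where the paper uses the locally uniform \(L^p\) convergence of its cited Lemma~2.7 and gets continuity along a subsequence. Also, your discarded first Laplace-transform line should have \(\be^{-2\lambda}\) rather than \(\be^{2\lambda}\), but you never use that line.
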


\begin{proof}
	
Apply \cite[Lemma~2.7]{RassoulAghaShenZhangZheng26} with the stationary case.  Relabeling \(B_k,\ldots,B_n\) as
\(B_0,\ldots,B_{n-k}\) gives, for every compact \(J\subset\R\),
\begin{align}\label{eq:shifted-zero-temperature-limit}
	&\EE\left[\sup_{u\in J}
	\bigl|F_{(u,k),n}^{\be}-L_{(u,k),n}\bigr|^p\right]\longrightarrow0,
	\qquad 0\leq k\leq n,\quad p<\infty.
\end{align}
Fix finite meshes on the horizontal pieces, and let \(X_{k,j}^\be\) and
\(X_{k,j}^\infty\) be the corresponding increments of \(\cB_k^\be\) and
\(\cB_k^\infty\), with mesh lengths \(\ell_{k,j}\).  Then
\eqref{eq:shifted-zero-temperature-limit} gives
\begin{align}\label{eq:finite-mesh-zero-temperature}
	&\left((X_{k,j}^\be)_{k,j},
	(\cV_{x_i}^\be(i))_{i=1}^n\right)
	\xrightarrow[\be\to\infty]{\mathbb P}
	\left((X_{k,j}^\infty)_{k,j},(V_i^\infty)_{i=1}^n\right).
\end{align}
For \(G\sim\operatorname{Gamma}(\be^{-1},\textup{rate }\be^{-2})\), for each \(r\in\R,\)
\begin{align*}
	&\EE[e^{ir\cV}]=\EE[G^{\mathrm ir/\be}]
	=\be^{2\mathrm ir/\be}
	\frac{\Gamma((1+\mathrm ir)/\be)}{\Gamma(1/\be)}
	\longrightarrow\frac1{1+\mathrm ir}
\end{align*}
as \(\be\to \infty,\) where the limit follows from \(z\Gamma(z)\to1\) as \(z\to0\).  Hence
Corollary~\ref{cor:Burke} and \eqref{eq:finite-mesh-zero-temperature} give
\begin{align}\label{eq:BLPP-product-transform}
	&\EE\exp\left\{\mathrm i\sum_{k=0}^n\sum_{j=1}^{m_k}
	t_{k,j}X_{k,j}^\infty+\mathrm i\sum_{i=1}^nr_iV_i^\infty\right\}
	=\prod_{k=0}^n\prod_{j=1}^{m_k}e^{-\ell_{k,j}t_{k,j}^2/2}
	\prod_{i=1}^n\frac1{1+\mathrm ir_i}.
\end{align}
The right-hand side is the product of the characteristic functions of the
stated Gaussian increments and negative rate-one exponential variables.
Moreover, \eqref{eq:shifted-zero-temperature-limit} gives a continuous
version of each \(\cB_k^\infty\): along a subsequence the continuous
free-energy profiles converge locally uniformly almost surely.  Nested finite
meshes with dense union, followed by continuity, therefore upgrade the
finite-dimensional product law to the process statement.
\end{proof}

\begin{remark}[Zero-temperature variance identity]
The preceding Burke property also gives a zero-temperature counterpart of
the variance identity.  Indeed, the diagonal free-energy identity \eqref{eq:variance-terminal} and
the Burke property imply, for \(t\geq0\),
\begin{align}
	\Var(F_{-t}^\be)
	&=
	\frac{n}{\be^2}\Psi_1(\be^{-1})-t
	+2\EE\left[
		\int (t-\ga_n)^+\,Q_0^\be(d\ga)
	\right],
	\label{eq:finite-temp-variance}
\end{align}
where \(\Psi_1\) is the trigamma function.  Here
\(n\be^{-2}\Psi_1(\be^{-1})=\Var(F_0^\be)\) is the variance contributed
by the \(n\) independent vertical log-Gamma Burke increments.

Let \(\Gamma_0(n)\) denote the terminal coordinate of the almost surely
unique maximizing path for \(L_{(0,0),n}\).  Since \(\be^{-2}\Psi_1(\be^{-1})\longrightarrow1\)
and \(Q_0^\be\) concentrates on the maximizing path as
\(\be\to\infty\), passing to the zero-temperature limit in
\eqref{eq:finite-temp-variance} gives
\begin{align}
	\Var\bigl(L_{(-t,0),n}\bigr)
	=
	n-t+2\EE\bigl[(t-\Gamma_0(n))^+\bigr].
	\label{eq:BLPP-variance}
\end{align}
Under the usual reversal between the point-to-line and stationary
boundary formulations, \eqref{eq:BLPP-variance} is the zero-temperature
analogue of the variance--exit identity in \cite{SV10}.
\end{remark}

\section{Proofs of the main results}
\label{sec:proof}

\subsection{Proof strategy}
We first derive the one-corner Proposition~\ref{prop:Gaussian-Gamma-Stein} from the multilevel
Theorem~\ref{thm:multilevel-identity}.  The Gaussian part is the same
prefix-telescoping calculation that leads to
\cite[Equation~(2.47)]{RassoulAghaShenZhangZheng26}, with the multilevel
identity replacing the level-zero formula.  The new point is the Gamma
identity: the diagonal multilevel formula is paired once more with the
level-zero white noise, and the two half-contributions are combined with the
first-jump recursion for the partition function.

We then prove Theorem~\ref{thm:multilevel-identity}.  The argument follows the
mechanism of \cite[Sections~1.3 and 2.1--2.6]{RassoulAghaShenZhangZheng26}.
After mollifying the Brownian environment, Gaussian integration by parts and
the pathwise cancellations of Lemmas~B.1--B.2 there reduce the correlation
identity to a four-point pairing kernel.  The estimates of Proposition~C.3
and Lemmas~C.4--C.5 there justify removal of the mollification, and the
switching identity \cite[Lemma~2.2 and Equations~(2.12)--(2.14)]
{RassoulAghaShenZhangZheng26} identifies the limiting sign kernel.

The new kernel comes from pairing with \(F_{(y,k),n}^{\be}\).  Only levels
\(k,\ldots,n\) are shared with the reference polymer.  Conditional on the
entrance point at level \(k\), the upper part of the reference path has the
tail law \(Q_{(x+\ga_k,k),n}^{\be}\). After relabeling the levels, the
mixed-level switching step is exactly the same identity just cited.  The
remaining issue is the spatial diagonal, where the even mollifier produces a
half weight at level zero.  Both limits are collected in
Lemma~\ref{lem:covariance-kernel-limit}.

\subsection*{Notation used in the proof}
For convenience, we collect the notation used repeatedly in this section.
\begin{description}[leftmargin=2.8cm,style=nextline]
\item[\(\eps\)] A superscript \(\eps\) means that each \(B_i\) is replaced
by its mollification \(B_i^\eps\); superscript \(0\) denotes the original
environment.
\item[\(\phi^\eps,\Theta^\eps,\Psi^\eps\)] The mollifier, its odd primitive,
and the convolved sign approximation, defined in \eqref{eq:mollification}.
\item[\(\xi_i,D_{i,z},\fH\)] The white noise on level \(i\), its Malliavin
derivative at \(z\), and \(\fH=\bigoplus_{i=0}^nL^2(\R)\).
\item[\(\cZ_{(x,k),n}^{\be,\eps}\)] The mollified partition function with
the deterministic factor \(\be^{-(n-k)}\) removed; see
\eqref{eq:mollified-polymer}.
\item[\(\Xi_{i,\eps}\)] The four-point increment in
\eqref{eq:four-point-kernel}.
\item[\(\cK_{0,k}^{\be,\eps}(x,y)\)] The full pairing kernel in
\eqref{eq:covariance-kernel}.
\item[\(\|\cdot\|_{\mathrm{TV}}\)] Total variation with the convention
\(\|\mu-\nu\|_{\mathrm{TV}}=\sup_{\|f\|_\infty\leq1}|\int f\,d\mu-\int f\,d\nu|\).
\end{description}

\subsection{Gaussian integration by parts and Malliavin derivatives}
We record the Malliavin-calculus input used below.  Further properties and
moment estimates for the mollified free energy and its Malliavin derivative
are collected in \cite[Appendix~A]{RassoulAghaShenZhangZheng26}.

Let
\(\xi_i\) be the white-noise derivative of \(B_i\), fix a nonnegative even
\(\phi\in C_c^\infty(\R)\) with \(\int_\R\phi=1\), and set
\begin{align}\label{eq:mollification}
  &\phi^\eps(r)=\eps^{-1}\phi(r/\eps),\qquad
  \xi_i^\eps(x)=\int_\R\phi^\eps(x-z)\,\xi_i(dz),\notag\\[-2pt]
  &B_i^\eps(t)=\int_0^t\xi_i^\eps(s)\,ds,\qquad
  \Theta^\eps(r)=\int_0^r\phi^\eps(s)\,ds,\qquad
  \Psi^\eps=\Theta^\eps*\phi^\eps.
\end{align}
The functions \(\Theta^\eps\) and \(\Psi^\eps\) are odd and
\(\lvert\Psi^\eps\rvert\leq\frac12\).  Fix \(R_\phi\) so that
\(\operatorname{supp}\phi\subset[-R_\phi,R_\phi]\).  Then
\(\Psi^\eps(r)=\frac12\sgn(r)\) for \(|r|>2R_\phi\eps\).  

Write
\(D_{i,z}\) for the Malliavin derivative with respect to \(\xi_i\) at
\(z\), on the isonormal space \(\fH=\bigoplus_{i=0}^nL^2(\R)\); see
\cite[Sections~1.1--1.3]{Nua06}.  The form of Gaussian integration by parts
used below is the standard duality relation; see \cite[Section~1.3]{Nua06}.
\begin{lemma}[Gaussian integration by parts]\label{lem:Gaussian-IBP}
Let \(W=\{W(h):h\in\fH\}\) be the isonormal Gaussian process.  For every
\(F\in\DD^{1,2}\) and \(h\in\fH\),
\begin{align}\label{eq:Gaussian-IBP}
\EE[F W(h)]
=\EE\bigl[\langle DF,h\rangle_{\fH}\bigr]
=\sum_{i=0}^n\int_\R\EE[D_{i,y}F]h_i(y)\,dy.
\end{align}
\end{lemma}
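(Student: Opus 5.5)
This lemma is the standard duality relation between the Malliavin derivative and the divergence (Skorokhod integral), specialized to deterministic integrands. The plan is to derive it from the definition of \(D\) on smooth cylindrical functionals and then pass to \(\DD^{1,2}\) by closure. No earlier result of the paper is needed; the only inputs are the construction of \(W\) on \(\fH=\bigoplus_{i=0}^nL^2(\R)\) and the definition of \(\DD^{1,2}\) as the closure of smooth functionals under \(\|F\|_{1,2}^2=\EE[F^2]+\EE\|DF\|_\fH^2\) (see \cite[Section~1.2]{Nua06}).

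\textbf{Smooth cylindrical case.} Take \(F=f(W(e_1),\ldots,W(e_N))\) with \(f\in C_p^\infty(\R^N)\). Without loss of generality \(e_1=h/\|h\|_\fH\) (the case \(h=0\) is trivial), and \(e_1,\ldots,e_N\) are orthonormal in \(\fH\); this is arranged by re-expressing \(f\) after a Gram--Schmidt step.
\begin{enumerate}[(a)]
\item Then \(\xi_j=W(e_j)\) are i.i.d.\ \(N(0,1)\), and by definition \(DF=\sum_j\partial_jf(\xi)e_j\), so that \(\langle DF,h\rangle_\fH=\|h\|_\fH\,\partial_1 f(\xi)\).
\item One-dimensional Gaussian integration by parts, \(\EE[\xi_1g(\xi_1)]=\EE[g'(\xi_1)]\), applied conditionally on \(\xi_2,\ldots,\xi_N\) via Fubini, gives
\[
\EE[FW(h)]=\|h\|_\fH\,\EE[\xi_1f(\xi)]=\|h\|_\fH\,\EE[\partial_1f(\xi)]=\EE\langle DF,h\rangle_\fH .
\]
Polynomial growth of \(f\) makes all the integrals finite and removes the boundary terms.
\end{enumerate}

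\textbf{Extension to \(\DD^{1,2}\).} For general \(F\in\DD^{1,2}\), choose smooth \(F_\ell\to F\) in \(\|\cdot\|_{1,2}\).
\begin{enumerate}[(a)]
\item Since \(W(h)\in L^2\), Cauchy--Schwarz gives \(\EE[F_\ell W(h)]\to\EE[FW(h)]\).
\item Also \(|\EE\langle DF_\ell-DF,h\rangle_\fH|\le\|h\|_\fH\,(\EE\|DF_\ell-DF\|^2_\fH)^{1/2}\to0\).
\end{enumerate}
Hence the identity passes to the limit.

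\textbf{Coordinate form.} The last equality in \eqref{eq:Gaussian-IBP} is the expansion \(\langle DF,h\rangle_\fH=\sum_{i=0}^n\int_\R D_{i,y}F\,h_i(y)\,dy\) of the inner product on the direct sum. Taking expectations inside the integral is justified by Fubini, because \(\EE\int|D_{i,y}F||h_i(y)|\,dy\le(\EE\|DF\|_\fH^2)^{1/2}\|h\|_\fH<\infty\).

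The only mildly delicate step is the reduction to an orthonormal system containing \(h\) in the cylindrical case. It is routine, since any smooth cylindrical functional can be rewritten in terms of an orthonormal basis of the span of its defining vectors together with \(h\).
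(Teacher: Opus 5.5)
Your proof is correct and is essentially the argument the paper relies on. The paper gives no proof of its own; it cites the duality relation \(\EE[F\delta(h)]=\EE\langle DF,h\rangle_{\fH}\), with \(\delta(h)=W(h)\) for deterministic \(h\), from \cite[Section~1.3]{Nua06}, and that relation is proved by your computation for smooth cylindrical functionals followed by closure in \(\|\cdot\|_{1,2}\). The extra details you supply are routine: the Fubini justification of the coordinate form, and the rewriting in an orthonormal system containing \(h\), which uses that \(D\) on smooth functionals does not depend on the chosen representation.
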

Thus a factor linear in the Gaussian environment can be transferred to the
other random factors in the expectation at the cost of a Malliavin derivative.

With the analytic notation fixed, we first derive the one-corner identity
from the multilevel theorem.  The following subsections then prove the theorem
itself.

\subsection{The Gaussian--Gamma Stein identity from the multilevel identity}
\begin{proof}[Proof of Proposition~\ref{prop:Gaussian-Gamma-Stein}]

Write \(x=x_1\) and first suppose that \(m\geq1\). List the distinct
endpoint free energies occurring in \((U,\mathbf X,G)\), in their order
along the path, as
\[
V_r=F_{(y_r,k_r),n}^{\be},\qquad 0\leq r\leq N.
\]
Choose \(\ell\) so that
\begin{align}\label{eq:corner-vertices}
	V_{\ell-1}=F_{(x,1),n}^{\be},\qquad
	V_\ell=F_x^\be,\qquad
	V_{\ell+j}=F_{a_j}^\be,\quad 1\leq j\leq m.
\end{align}
Regard
\[
\widehat\Phi(V_0,\ldots,V_N)=\Phi(U,\mathbf X,G)
\]
as a function of the vertex values, and set
\[
\Phi_\eps=\Phi(U^\eps,\mathbf X^\eps,G^\eps),\qquad
(\partial_r\widehat\Phi)^\eps
=(\partial_r\widehat\Phi)(V_0^\eps,\ldots,V_N^\eps).
\]
Since all arguments of \(\Phi\) are functions of differences of vertex
values, a common shift gives
\begin{align}\label{eq:total-derivative-zero}
	\sum_{r=0}^N(\partial_r\widehat\Phi)^\eps=0.
\end{align}
Similarly, shifting \(V_0^\eps,\ldots,V_{\ell+j-1}^\eps\) by \(t\)
changes only \(X_j^\eps\), replacing it by \(X_j^\eps-t\). Hence
\begin{align}\label{eq:horizontal-telescoping}
	-\sum_{r=0}^{\ell+j-1}(\partial_r\widehat\Phi)^\eps
	=
	\partial_{x_j}\Phi_\eps,
	\qquad 1\leq j\leq m.
\end{align}

We record more explicitly the two shifts at the corner, since they will
also produce the Gamma identity. By \eqref{eq:corner-vertices},
\[
G^\eps=e^{\be(V_{\ell-1}^\eps-V_\ell^\eps)},
\qquad
X_1^\eps=V_{\ell+1}^\eps-V_\ell^\eps.
\]
Shifting \(V_0^\eps,\ldots,V_{\ell-1}^\eps\) by \(t\) leaves
\(U^\eps\) and \(\mathbf X^\eps\) unchanged and sends
\(G^\eps\) to \(e^{\be t}G^\eps\). Therefore
\begin{align}\label{eq:vertical-telescoping}
	\sum_{r=0}^{\ell-1}(\partial_r\widehat\Phi)^\eps
	=
	\be G^\eps\partial_g\Phi_\eps.
\end{align}
On the other hand, shifting \(V_0^\eps,\ldots,V_\ell^\eps\) by \(t\)
leaves \(U^\eps\) and \(G^\eps\) unchanged and sends
\(X_1^\eps\) to \(X_1^\eps-t\). Thus
\[
-\sum_{r=0}^{\ell}(\partial_r\widehat\Phi)^\eps
=
\partial_{x_1}\Phi_\eps.
\]
Combining this with \eqref{eq:vertical-telescoping} gives
\begin{align}
	\frac12(\partial_\ell\widehat\Phi)^\eps
	&=
	-\frac{\be}{2}G^\eps\partial_g\Phi_\eps
	-\frac12\partial_{x_1}\Phi_\eps,
	\label{eq:corner-derivative}\\
	-\sum_{r=0}^{\ell-1}(\partial_r\widehat\Phi)^\eps
	-\frac12(\partial_\ell\widehat\Phi)^\eps
	&=
	-\frac{\be}{2}G^\eps\partial_g\Phi_\eps
	+\frac12\partial_{x_1}\Phi_\eps.
	\label{eq:corner-telescoping}
\end{align}
The same identities hold at \(\eps=0\).

The Gaussian identity \eqref{eq:global-Gaussian-Stein} is the same
telescoping calculation as
\cite[Equation~(2.47)]{RassoulAghaShenZhangZheng26}, with the upper-path
variables \(U\) retained in the test function. Equivalently, applying
Theorem~\ref{thm:multilevel-identity}(i) for
\(s\in(a_{j-1},a_j)\), using
\eqref{eq:total-derivative-zero} and
\eqref{eq:horizontal-telescoping}, gives
\[
\frac{\partial}{\partial s}
\EE\left[F_s^\be\Phi(U,\mathbf X,G)\right]
=
\EE\left[\partial_{x_j}\Phi(U,\mathbf X,G)\right].
\]
Integration over \((a_{j-1},a_j)\) yields
\eqref{eq:global-Gaussian-Stein}.

It remains to prove the Gamma identity. At \(s=x\), the entrance coefficients in
\eqref{eq:multilevel-diagonal} are
\[
Q_x^\be\{\ga_{\kappa_r}>v_r-x\}
+\frac12\ind_{\{v_r=x,\,\kappa_r=0\}}
=\begin{cases}1,&r<\ell,\\[2pt]\frac12,&r=\ell,\\[2pt]0,&r>\ell.
\end{cases}
\]
Thus \eqref{eq:corner-telescoping} gives
\begin{align}\label{eq:corner-diagonal-identity}
\lim_{\eps\downarrow0}\EE[\Phi_\eps\partial_xF_x^{\be,\eps}]
=-\frac\be2\EE[G\partial_g\Phi]
+\frac12\EE[\partial_{x_1}\Phi],
\end{align}
where the arguments \((U,\mathbf X,G)\) are suppressed on the right.

Next, we pair the same test function with the level-zero white noise.  Since
\(U^\eps\) depends only on \(B_1^\eps,\ldots,B_n^\eps\),
\(D_{0,z}U^\eps=0\).  For \(r\geq\ell\),
\[
D_{0,z}F_{v_r}^{\be,\eps}
=\int[\Theta^\eps(v_r+\ga_1-z)-\Theta^\eps(v_r-z)]
Q_{v_r}^{\be,\eps}(d\ga).
\]
The Malliavin chain rule \cite[Proposition~1.2.4]{Nua06} and
Lemma~\ref{lem:Gaussian-IBP} give
\begin{align}\label{eq:white-noise-expansion}
\EE[\xi_0^\eps(x)\Phi_\eps]
=\sum_{r=\ell}^N\EE[(\partial_r\widehat\Phi)^\eps J_r^\eps],
\end{align}
with
\begin{align}\label{eq:white-noise-kernel}
J_r^\eps=\int[\Psi^\eps(v_r+\ga_1-x)-\Psi^\eps(v_r-x)]
Q_{v_r}^{\be,\eps}(d\ga).
\end{align}
Because \(v_\ell=x\),
\begin{align}
	\left|J_\ell^\eps-\frac12\right|
	&\leq
	\left|
	\int \Psi^\eps(\ga_1)
	\bigl(Q_x^{\be,\eps}-Q_x^\be\bigr)(d\ga)
	\right|
	\notag
	+
	\left|
	\int
	\left(\Psi^\eps(\ga_1)-\frac12\right)
	Q_x^\be(d\ga)
	\right|
	\notag\\
	&\leq
	\|Q_x^{\be,\eps}-Q_x^\be\|_{\mathrm{TV}}
	+
	\int
	\left|\Psi^\eps(\ga_1)-\frac12\right|
	Q_x^\be(d\ga)
	\notag\\
	&\leq
	\|Q_x^{\be,\eps}-Q_x^\be\|_{\mathrm{TV}}
	+
	Q_x^\be\{\ga_1\leq 2R_\phi\eps\}
	\longrightarrow0.
	\label{eq:J-diagonal-limit}
\end{align}
For \(r>\ell\), the path ordering gives \(v_r>x\), and hence
\(J_r^\eps=0\) once \(2R_\phi\eps<v_r-x\).  The convergence estimates in
\cite[Proposition~C.3 and Lemma~A.3(iv)]{RassoulAghaShenZhangZheng26}
therefore justify passage to the limit in
\eqref{eq:white-noise-expansion}.  Using \eqref{eq:corner-derivative},
\begin{align}\label{eq:white-noise-identity}
\lim_{\eps\downarrow0}\EE[\xi_0^\eps(x)\Phi_\eps]
=-\frac\be2\EE[G\partial_g\Phi]-\frac12\EE[\partial_{x_1}\Phi].
\end{align}
Adding this to \eqref{eq:corner-diagonal-identity} cancels the
\(\partial_{x_1}\Phi\) terms and gives
\begin{align}\label{eq:combined-contraction}
\lim_{\eps\downarrow0}\EE[\Phi_\eps(\partial_xF_x^{\be,\eps}
+\xi_0^\eps(x))]
=-\be\,\EE[G\partial_g\Phi(U,\mathbf X,G)].
\end{align}

Finally, we identify the random factor in
\eqref{eq:combined-contraction} through the first-jump decomposition.
Set \(s=\ga_1\), \(t=x+s\), and
\(\eta_j=\ga_j-\ga_1\), \(1\leq j\leq n\). Then
\[
H_x^\eps(\ga)
=
B_0^\eps(t)-B_0^\eps(x)
+
H_{(t,1),n}^\eps(\eta),
\]
while the reference measure factors at the first jump.  Integrating first
over \(s\) gives
\begin{align}\label{eq:first-jump-tail}
	Z_x^{\be,\eps}
	&=
	e^{-\be B_0^\eps(x)+\be x}
	\int_x^\infty
	e^{\be[B_0^\eps(t)-t]}
	Z_{(t,1),n}^{\be,\eps}\,dt.
\end{align}
Write \(I^\eps(x)=\int_x^\infty e^{\be[B_0^\eps(t)-t]} Z_{(t,1),n}^{\be,\eps}\,dt,\)
so that \(F_x^{\be,\eps}=-B_0^\eps(x)+x+\frac1\be\log I^\eps(x).\)
By differentiation with respect to \(x\),
\[
(I^\eps)'(x)
=
-e^{\be[B_0^\eps(x)-x]}
Z_{(x,1),n}^{\be,\eps}.
\]
Consequently,
\begin{align}
	\partial_xF_x^{\be,\eps}
	=
	-\xi_0^\eps(x)+1
	-\frac1\be
	\frac{
		e^{\be[B_0^\eps(x)-x]}Z_{(x,1),n}^{\be,\eps}
	}{
		I^\eps(x)
	}
    =
	-\xi_0^\eps(x)+1
	-\frac1\be
	\frac{Z_{(x,1),n}^{\be,\eps}}
	{Z_x^{\be,\eps}},
\end{align}
where the second equality follows from
\eqref{eq:first-jump-tail}. Hence
\begin{align}\label{eq:first-jump-derivative}
	\partial_xF_x^{\be,\eps}+\xi_0^\eps(x)
	&=
	1-\frac1\be
	\frac{Z_{(x,1),n}^{\be,\eps}}
	{Z_x^{\be,\eps}}
	=
	1-\frac1\be G^\eps.
\end{align}

By \cite[Lemma~A.3(iv)]{RassoulAghaShenZhangZheng26} and compact support of
\(\Phi\), the terms in \eqref{eq:combined-contraction} converge in
\(L^1\).  Substitution gives
\[
\EE[\Phi]-\be^{-1}\EE[G\Phi]
=-\be\,\EE[G\partial_g\Phi],
\]
which is equivalent to \eqref{eq:global-Gamma-Stein}.

For \(m=0\), fix \(a_1>x\), choose \(\chi\in C_c^1(\R)\) with \(\chi=1\)
near zero, and apply the \(m=1\) identity to
\(\Phi_R(u,z,g)=\Phi(u,g)\chi(z/R)\).  Letting \(R\to\infty\) and using
dominated convergence proves the claim.
\end{proof}

\subsection{The mollified integration-by-parts formula}

The one-corner proposition used the multilevel identity only through its
statement.  We now prove that identity.  The first step is an exact formula in
the mollified environment. All limiting arguments are postponed to the next
subsection.

The Malliavin derivative of a test free energy records the mollified indicator
of each of its horizontal segments.  If that polymer starts on level
\(k\), only the common levels \(k,\ldots,n\) are paired with the reference
polymer; each pairing gives one four-point increment.  The terms in which
the derivative hits the normalized Gibbs density cancel pathwise by
\cite[Lemmas~B.1 and B.2]{RassoulAghaShenZhangZheng26}.

For the integration-by-parts calculation it is convenient to suppress the
deterministic normalization of the partition function.  For
\(0\leq k\leq n\), write
\begin{align}\label{eq:mollified-polymer}
	\begin{aligned}
		&\cZ_{(x,k),n}^{\be,\eps}=
		\int_{\Delta_{k,n}}
		e^{\be H_{(x,k),n}^\eps(\ga)}
		\mu_\be^{k,n}(d\ga)
		=\be^{n-k}Z_{(x,k),n}^{\be,\eps},
		\\
		&Q_{(x,k),n}^{\be,\eps}(d\ga)=
		\frac{e^{\be H_{(x,k),n}^\eps(\ga)}}
		{\cZ_{(x,k),n}^{\be,\eps}}
		\mu_\be^{k,n}(d\ga).
	\end{aligned}
\end{align}
The deterministic factor does not affect spatial or Malliavin
derivatives.  In the level-zero case we use the stationary
abbreviations
\(\cZ_x^{\be,\eps}=\cZ_{(x,0),n}^{\be,\eps}\),
\(Q_x^{\be,\eps}=Q_{(x,0),n}^{\be,\eps}\), and
\(F_x^{\be,\eps}=F_{(x,0),n}^{\be,\eps}\).

We next introduce the kernel produced by pairing two horizontal energy
increments.  For \(\ga\in\DN\) and \(\eta\in\Delta_{k,n}\), put
\(u_i=x+\ga_i\) and \(v_i=y+\eta_i\).  For
\(k\leq i\leq n-1\), set
\begin{align}\label{eq:four-point-kernel}
	\Xi_{i,\eps}^{(y,k),(x,0)}(\eta,\ga)
	=\Psi^\eps(v_{i+1}-u_{i+1})-\Psi^\eps(v_i-u_{i+1})-\Psi^\eps(v_{i+1}-u_i)+\Psi^\eps(v_i-u_i).
\end{align}
The four terms come from the two endpoints of each segment. Together they
form the rectangular increment of \(\Psi^\eps\).  After summing over the
common levels and adding the contribution from the terminal Brownian motion,
we obtain
\begin{align}\label{eq:covariance-kernel}
	\cK_{0,k}^{\be,\eps}(x,y)=
	\iint
	\Bigg\{
	\sum_{i=k}^{n-1}
	\Xi_{i,\eps}^{(y,k),(x,0)}(\eta,\ga)
	+\Psi^\eps(y+\eta_n-x-\ga_n)
	+\Psi^\eps(x+\ga_n)
	\Bigg\}\times
	Q_{(y,k),n}^{\be,\eps}(d\eta)
	Q_x^{\be,\eps}(d\ga),
\end{align}
where the sum is empty when \(k=n\).

\begin{lemma}[Mollified integration-by-parts identity]
	\label{lem:mollified-ibp}
	Fix \(m\geq1\), \(\mathbf y=(y_1,\ldots,y_m)\in\R^m\), and
	\(\mathbf k=(k_1,\ldots,k_m)\in\{0,\ldots,n\}^m\).  Let
	\(\mathbf F^\eps=
	(F_{(y_1,k_1),n}^{\be,\eps},\ldots,
	F_{(y_m,k_m),n}^{\be,\eps})\), and suppose that
	\(\varphi\in C^1(\R^m)\) and \(\|\nabla\varphi\|_\infty<\infty\).
	Then, for \(\eps>0\),
	\begin{align}\label{eq:mollified-ibp}
		\frac{\partial}{\partial x}
		\EE\left[
		\varphi(\mathbf F^\eps)F_x^{\be,\eps}
		\right]
		=
		\sum_{j=1}^{m}
		\EE\left[
		\partial_j\varphi(\mathbf F^\eps)
		\cK_{0,k_j}^{\be,\eps}(x,y_j)
		\right].
	\end{align}
\end{lemma}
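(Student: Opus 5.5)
We need to prove the mollified identity. The plan is to write $\partial_x F_x^{\be,\eps}$ as a Gaussian element, apply integration by parts, and compute the Malliavin derivatives.

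\emph{Step 1: differentiate in $x$.} In the mollified environment everything is smooth in $x$, and by the moment bounds of \cite[Appendix~A]{RassoulAghaShenZhangZheng26} we may exchange $\partial_x$ and $\EE$. This gives
\[
\partial_x\EE[\varphi(\mathbf F^\eps)F_x^{\be,\eps}]=\EE[\varphi(\mathbf F^\eps)\partial_xF_x^{\be,\eps}],
\]
because $\mathbf F^\eps$ does not depend on $x$. Differentiating the Hamiltonian gives
\[
\partial_xF_x^{\be,\eps}=\int\Bigl\{\xi_n^\eps(x+\ga_n)+\sum_{i=0}^{n-1}\bigl[\xi_i^\eps(x+\ga_{i+1})-\xi_i^\eps(x+\ga_i)\bigr]\Bigr\}Q_x^{\be,\eps}(d\ga).
\]
This is a Gibbs average of values of the mollified white noise, so it is not itself a single $W(h)$.

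\emph{Step 2: integrate by parts.} Write each $\xi_i^\eps(u)$ as $W(\phi^\eps(u-\cdot)e_i)$ and apply Fubini. For each fixed $\ga$, Lemma~\ref{lem:Gaussian-IBP} is applied to the random factor $\varphi(\mathbf F^\eps)\,\cZ_x^{-1}e^{\be H_x^\eps(\ga)}$. This is the standard route (equivalently, use the divergence identity $\EE[FW(h)]=\EE\langle DF,h\rangle$ with $F$ being this product). The derivative then falls on three places.

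(a) It can fall on $\varphi(\mathbf F^\eps)$. This gives $\sum_j\partial_j\varphi\cdot\langle D F_{(y_j,k_j),n}^{\be,\eps},h_\ga\rangle$.

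(b) It can fall on $e^{\be H_x^\eps(\ga)}$.

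(c) It can fall on $\cZ_x^{-1}$.

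Terms (b) and (c) together are the derivative of the normalized Gibbs density. They cancel pathwise after integration against $Q_x^{\be,\eps}(d\ga)$ and summation over the levels, by \cite[Lemmas~B.1 and B.2]{RassoulAghaShenZhangZheng26}. I will check that these lemmas apply verbatim, since this part involves only the level-zero polymer.

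In (a), compute $D_{i,z}F_{(y,k),n}^{\be,\eps}$. For $i\ge k$ it equals
\[
\int\bigl[\Theta^\eps(v_{i+1}-z)-\Theta^\eps(v_i-z)\bigr]Q_{(y,k),n}^{\be,\eps}(d\eta)\quad\text{for } i<n,
\]
and $\int\Theta^\eps(v_n-z)\,Q(d\eta)$ for $i=n$. It vanishes for $i<k$.

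\emph{Step 3: pair the two derivatives.} Pairing with $\phi^\eps(u-\cdot)$ uses $\int\Theta^\eps(v-z)\phi^\eps(u-z)\,dz=\Psi^\eps(v-u)$, which follows from the evenness of $\phi$ and the definition $\Psi^\eps=\Theta^\eps*\phi^\eps$. On each common level $k\le i\le n-1$ this produces exactly the rectangular increment $\Xi_{i,\eps}$. The terminal level produces $\Psi^\eps(v_n-u_n)$.

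The remaining term $\Psi^\eps(x+\ga_n)$ is the step where I expect the main obstacle. I expect it to come from the normalization $B_i(0)=0$: the free energy involves $B_n(x+\ga_n)=\int_0^{x+\ga_n}\xi_n$, so the terminal term is evaluated against the base point $0$. Concretely, $D_{n,z}F$ contains $\Theta^\eps(v_n-z)-\Theta^\eps(0-z)$. Pairing the extra $-\Theta^\eps(-z)$ piece gives $-\Psi^\eps(-u_n)=\Psi^\eps(x+\ga_n)$. The same care is needed for the level-$n$ term of the reference polymer.

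Finally, collecting terms gives the kernel $\cK_{0,k_j}^{\be,\eps}(x,y_j)$, which yields \eqref{eq:mollified-ibp}.

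The main care points are the sign bookkeeping of these base-point terms and the justification that $\varphi(\mathbf F^\eps)$ and the Gibbs density lie in $\DD^{1,2}$ with integrable derivatives. For the latter I would cite \cite[Appendix~A]{RassoulAghaShenZhangZheng26}.
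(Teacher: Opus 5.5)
Your proposal is correct and follows the paper's proof essentially step for step. Both arguments differentiate \(F_x^{\be,\eps}\) into a Gibbs average of mollified white-noise values and integrate by parts. They then discard the terms where the derivative hits the normalized Gibbs density, using the pathwise cancellations of \cite[Lemmas~B.1 and B.2]{RassoulAghaShenZhangZheng26}, which in fact vanish level by level. Finally, they convolve the Malliavin derivatives of the test free energies against \(\phi^\eps\) to produce \(\Xi_{i,\eps}\) and the two terminal \(\Psi^\eps\) terms.

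The one slip is cosmetic: your Step~2 formula for \(D_{n,z}F_{(y,k),n}^{\be,\eps}\) should already include the base-point term \(-\Theta^\eps(-z)\), as you correct in Step~3. This is exactly where \(\Psi^\eps(x+\ga_n)\) comes from in the paper.
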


\begin{proof}
	We give the calculation first.  Differentiation under the path integral and
	the applications of Fubini below are justified by the endpoint and
	Malliavin bounds in
	\cite[Proposition~A.5(i) and (iii)]{RassoulAghaShenZhangZheng26}.  The same
	bounds hold for a starting level \(k\) after shifting the level indices.
	The assumption on
	\(\nabla\varphi\) also implies that \(\varphi\) has at most linear growth.
	Differentiating the level-zero partition function gives
	\begin{align}\label{eq:mollified-spatial-derivative}
		&\partial_xF_x^{\be,\eps}=
		\int
		\Bigg[
		\xi_n^\eps(x+\ga_n)
		+\sum_{i=0}^{n-1}
		\bigl(
		\xi_i^\eps(x+\ga_{i+1})
		-\xi_i^\eps(x+\ga_i)
		\bigr)
		\Bigg]
		Q_x^{\be,\eps}(d\ga).
	\end{align}
	
	For the free energy started from \((y,k)\), differentiation with respect to
	the white noise on level \(i\) gives
	\begin{align}
		&D_{i,z}F_{(y,k),n}^{\be,\eps}=
		\int
		\bigl[
		\Theta^\eps(y+\eta_{i+1}-z)
		-\Theta^\eps(y+\eta_i-z)
		\bigr]
		Q_{(y,k),n}^{\be,\eps}(d\eta),
		\qquad k\leq i\leq n-1,
		\label{eq:bulk-Malliavin-derivative}\\
		&D_{n,z}F_{(y,k),n}^{\be,\eps}=
		\int
		\bigl[
		\Theta^\eps(y+\eta_n-z)-\Theta^\eps(-z)
		\bigr]
		Q_{(y,k),n}^{\be,\eps}(d\eta),
		\label{eq:terminal-Malliavin-derivative}
	\end{align}
	and \(D_{i,z}F_{(y,k),n}^{\be,\eps}=0\) for \(i<k\).  Indeed, the
	Malliavin derivative records the mollified indicator of the horizontal
	segment on level \(i\). The factor \(\be\) from the Gibbs weight cancels the
	factor \(\be^{-1}\) in the free energy.
	
	By \cite[Proposition~A.5(iii)]{RassoulAghaShenZhangZheng26}, every component
	of \(\mathbf F^\eps\) belongs to \(\DD^{1,2}\).  The Malliavin chain rule \cite[Proposition~1.2.4]{Nua06}
	therefore gives
	\begin{align}\label{eq:multilevel-chain-rule}
		D\varphi(\mathbf F^\eps)
		=
		\sum_{j=1}^{m}
		\partial_j\varphi(\mathbf F^\eps)
		DF_{(y_j,k_j),n}^{\be,\eps}.
	\end{align}

	Apply Lemma~\ref{lem:Gaussian-IBP}, in the form
	\eqref{eq:Gaussian-IBP}, to each white-noise term in
	\eqref{eq:mollified-spatial-derivative}. When the Malliavin derivative
	falls on \(\varphi(\mathbf F^\eps)\), the chain rule produces the desired
	pairing with the test free energies. The remaining terms, in which the
	derivative falls on the normalized Gibbs density, vanish by the
	cancellation identities
	\cite[Lemmas~B.1 and B.2]{RassoulAghaShenZhangZheng26}. More precisely,
	with \(\rho_x^\eps(\ga)=\frac{e^{\be H_x^\eps(\ga)}}{\cZ_x^{\be,\eps}},\)
	these identities give, almost surely,
	\begin{align}
		\int_\R dz\int\mu_\be(d\ga)\,
		D_{i,z}\rho_x^\eps(\ga)
		\bigl[
		\phi^\eps(x+\ga_{i+1}-z)
		-\phi^\eps(x+\ga_i-z)
		\bigr]
		&=0,
		\label{eq:bulk-cancellation}\\
		\int_\R dz\int\mu_\be(d\ga)\,
		D_{n,z}\rho_x^\eps(\ga)
		\phi^\eps(x+\ga_n-z)
		&=0.
		\label{eq:terminal-cancellation}
	\end{align}
	The cancellations are pathwise and therefore remain valid in the present
	calculation after multiplication by the factors arising from
	\(\varphi(\mathbf F^\eps)\). Thus only the terms in which the Malliavin
	derivative acts on the test free energies remain.
	
	It remains to collect the terms in which the derivative hits the test function.
	Insert
	\eqref{eq:multilevel-chain-rule} and
	\eqref{eq:bulk-Malliavin-derivative} into a bulk term of
	\eqref{eq:mollified-spatial-derivative}.  Convolution in the
	Malliavin variable \(z\) gives
	\begin{align*}
		\int_\R
		\bigl[
		\Theta^\eps(y_j+\eta_{i+1}-z)
		-\Theta^\eps(y_j+\eta_i-z)
		\bigr]\times
		\bigl[
		\phi^\eps(x+\ga_{i+1}-z)
		-\phi^\eps(x+\ga_i-z)
		\bigr]\,dz,
	\end{align*}
	which is \(	\Xi_{i,\eps}^{(y_j,k_j),(x,0)}(\eta,\ga).\)
	The terminal derivative \eqref{eq:terminal-Malliavin-derivative} is computed
	in the same way and gives
	\begin{align*}
		\int_\R
		\bigl[
		\Theta^\eps(y_j+\eta_n-z)-\Theta^\eps(-z)
		\bigr]
		\phi^\eps(x+\ga_n-z)\,dz=
		\Psi^\eps(y_j+\eta_n-x-\ga_n)
		+\Psi^\eps(x+\ga_n).
	\end{align*}
	Integrating against the two quenched polymer measures gives the terms in
	\(\cK_{0,k_j}^{\be,\eps}(x,y_j)\).  Summing over \(j\) gives
	\eqref{eq:mollified-ibp}.
\end{proof}

\subsection{Limit of the pairing kernel}

We follow the removal-of-mollification step in
\cite[Section~1.3]{RassoulAghaShenZhangZheng26}.  It is useful to separate
the argument into two parts.  First, total-variation convergence and a
small-diagonal estimate replace the mollified sign function by its limit.
Second, the switching identity identifies the resulting sign kernel with an
entrance probability under the quenched measure.  The same argument applies on the diagonal, where the
symmetry of the mollifier produces the factor \(1/2\).

\begin{lemma}[Limit of the pairing kernel]
	\label{lem:covariance-kernel-limit}
	Fix \(0\leq k\leq n\).
	
	\noindent\textup{(i)} If \(y\in\R\) and
	\(I\Subset\R\setminus\{y\}\), then, almost surely and uniformly for
	\(x\in I\),
	\begin{align}\label{eq:off-diagonal-kernel-limit}
		\cK_{0,k}^{\be,\eps}(x,y)
		\longrightarrow
		Q_x^\be\{\ga_n>-x\}
		-Q_x^\be\{\ga_k>y-x\}.
	\end{align}
	
	\noindent\textup{(ii)} For every compact \(J\subset\R\), almost surely
	and uniformly for \(x\in J\),
	\begin{align}\label{eq:diagonal-kernel-limit}
		\cK_{0,k}^{\be,\eps}(x,x)
		\longrightarrow
		Q_x^\be\{\ga_n>-x\}
		-\ind_{\{k>0\}}-\frac12\ind_{\{k=0\}}.
	\end{align}
	In both cases, the supremum over the indicated compact set of the
	absolute difference also converges to zero in \(L^p(\Omega)\) for every
	finite \(p\).
\end{lemma}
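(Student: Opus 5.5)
The plan is to follow the two-step scheme announced before the statement: first replace the mollified objects in \eqref{eq:covariance-kernel} by their $\eps=0$ counterparts, and then evaluate the resulting sign kernel with the switching identity.

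\textbf{Step 1: removing the mollification.} Since $|\Psi^\eps|\le\frac12$, the integrand in \eqref{eq:covariance-kernel} is bounded by $2n+1$. Hence replacing $Q_{(y,k),n}^{\be,\eps}\otimes Q_x^{\be,\eps}$ by $Q_{(y,k),n}^{\be}\otimes Q_x^{\be}$ costs at most $(2n+1)$ times the sum of the two total-variation distances. By \cite[Proposition~C.3]{RassoulAghaShenZhangZheng26}, after the level shift for the start at level $k$, these distances tend to zero almost surely and in every $L^p$, uniformly for $x$ and $y$ in compacts.

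Next, $\Psi^\eps(r)=\frac12\sgn(r)$ whenever $|r|>2R_\phi\eps$. So replacing every $\Psi^\eps$ by $\frac12\sgn$ costs at most the $Q\otimes Q$ mass of the near-coincidence sets $\{|v_a-u_b|\le 2R_\phi\eps\}$ and $\{|x+\ga_n|\le2R_\phi\eps\}$. These sets must be split into two kinds:
\begin{itemize}[leftmargin=2em]
\item \emph{Genuine coincidences.} These are pairs with $v_a=u_b$ forced identically. The only one is $a=b=k=0$ with $y=x$, which occurs only on the diagonal.
\item \emph{Non-degenerate pairs.} For these, the endpoint-density and small-ball estimates of \cite[Lemmas~C.4--C.5]{RassoulAghaShenZhangZheng26} should show that the mass tends to zero uniformly for $x\in I$.
\end{itemize}
In case (i), $|y-x|$ is bounded below on $I$, so the pair $(v_k,u_k)$ with $k=0$ is also non-degenerate.

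I expect this small-diagonal estimate, made uniform in $x$ and joint over two independent quenched measures, to be the main technical obstacle. I would first try to bound it by $\sup_{z}Q\{|x+\ga_b-z|\le2R_\phi\eps\}$, using the independence of the two factors in the product measure.

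\textbf{Step 2: evaluating the sign kernel.} The terminal term contributes
\[
\int \tfrac12\sgn(x+\ga_n)\,Q_x^\be(d\ga)=Q_x^\be\{\ga_n>-x\}-\tfrac12.
\]
For the remaining terms, I would use the Markov property of $Q_x^\be$ at level $k$. Conditional on $(\ga_1,\dots,\ga_k)$, the shifted path $(\ga_j-\ga_k)_{j\ge k}$ has law $Q_{(x+\ga_k,k),n}^\be$. The summand involves only the levels $k,\dots,n$.

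After relabeling the levels $k,\dots,n$ as $0,\dots,n-k$, the conditional integral of
\[
\tfrac12\Bigl[\textstyle\sum_{i\ge k}\Xi_i+\sgn(v_n-u_n)\Bigr]
\]
is exactly the two-polymer sign kernel for same-level starts at the points $u_k=x+\ga_k$ and $y$. The switching identity \cite[Lemma~2.2 and (2.12)--(2.14)]{RassoulAghaShenZhangZheng26} evaluates this kernel as $-\frac12\sgn(u_k-y)$. This is consistent with the level-zero formula in Proposition~\ref{prop:level-zero-input}, where it produces $-\ind_{\{x>y\}}+\frac12$.

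Averaging over $\ga_k$ gives $\frac12-Q_x^\be\{\ga_k>y-x\}$. This uses that $Q_x^\be\{x+\ga_k=y\}=0$ when $k>0$, since the endpoint laws have densities, and that $x\neq y$ when $k=0$. Adding the terminal contribution yields \eqref{eq:off-diagonal-kernel-limit}.

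\textbf{Step 3: the diagonal.} For $k>0$ and $y=x$, the points $v_k=x$ and $u_k=x+\ga_k$ are distinct almost surely because $\ga_k>0$. The same argument therefore gives the entrance term $Q_x^\be\{\ga_k>0\}=1$.

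For $k=0$, the degenerate term $\Psi^\eps(v_0-u_0)=\Psi^\eps(0)=0$ by oddness, whereas the switching formula applied with $\frac12\sgn$ would assign the value $-\frac12\sgn(0)$ there. Keeping this term separately, the remaining terms with $v_0-u_1=-\ga_1$ and $v_1-u_0=\eta_1$ are non-degenerate. Their signs are $\mp1$ up to the mass of $\{\ga_1\le2R_\phi\eps\}$, exactly as in \eqref{eq:J-diagonal-limit}.

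Collecting terms, the entrance contribution becomes $\frac12$ in place of $1$, which gives \eqref{eq:diagonal-kernel-limit}. Uniformity on $J$ again comes from Step 1, and the $L^p$ statements follow from the uniform bound $2n+1$ together with dominated convergence.
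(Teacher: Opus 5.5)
Your Steps~1--2, and the case $k>0$ of Step~3, follow the paper's proof. The ingredients are the same: total-variation replacement via Proposition~C.3, a small-ball estimate for the non-degenerate coordinate differences, and the switching identity applied to the upper tails after conditioning on $(\ga_1,\dots,\ga_k)$. For the small-ball estimate, the paper uses Cauchy--Schwarz against the uniform $L^2$ bounds on the Gibbs densities, which is also what your $\sup_z$ bound would need.

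The gap is the level-zero diagonal $k=0$, $y=x$. This is the one place where the lemma goes beyond the companion paper, and it is where the half weight used later in the Gamma identity comes from. What must be shown is that the non-anchored part of $\cK_{0,0}^{\be,\eps}(x,x)$ tends to $0$; that part is all the $\Xi_{i,\eps}$ together with $\Psi^\eps(\eta_n-\ga_n)$.

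You obtain this by evaluating the switching formula at $u_0=y$. However, the switching identity is invoked, both in your Step~2 and in the paper, only for distinct starting points. At coinciding starting points its conclusion is exactly the claim to be proved, so the citation does not cover it.

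The rest of your Step~3 does not fill this:
\begin{itemize}
\item $\Psi^\eps(0)=0$ and $-\frac12\sgn(0)$ are the same number, so there is no discrepancy to keep track of separately.
\item The two non-degenerate terms $-\Psi^\eps(-\ga_1)$ and $-\Psi^\eps(\eta_1)$ tend to $+\frac12$ and $-\frac12$. They cancel each other, in fact exactly, since both are integrated against $Q_x^{\be,\eps}$. So they cannot produce the $\frac12$.
\item ``Collecting terms'' then leaves $\Psi^\eps(\eta_1-\ga_1)$, all levels $i\geq1$, and the cross terminal term unevaluated.
\end{itemize}

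The missing idea is \emph{replica-exchange antisymmetry}, which is what the paper uses. When $k=0$ and $y=x$, both factors of $Q_{(x,0),n}^{\be,\eps}\otimes Q_x^{\be,\eps}$ are the same measure. Exchanging $\eta\leftrightarrow\ga$ changes the sign of every $\Xi_{i,\eps}$ and of $\Psi^\eps(\eta_n-\ga_n)$, because $\Psi^\eps$ is odd. Hence all of these integrate to zero exactly, for every $\eps>0$, and
\[
\cK_{0,0}^{\be,\eps}(x,x)=\int\Psi^\eps(x+\ga_n)\,Q_x^{\be,\eps}(d\ga).
\]
The half weight is therefore carried entirely by the anchored terminal term. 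Its limit $Q_x^\be\{\ga_n>-x\}-\frac12$, uniformly on $J$, follows from your Step~1.

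A workable alternative, closer to what you seem to intend, goes through continuity. Off the diagonal, the switching identity says that the non-anchored block minus its own $(0,0)$ term $\frac12\sgn(y-x)$ integrates to zero. Every remaining term is continuous through $y=x$, for almost every $(\eta,\ga)$. Passing to $y\to x$ using the total-variation continuity in the base point and the absence of atoms then gives the same conclusion. Either way, some such argument has to be supplied.
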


\begin{proof}
	For every compact \(J\subset\R\), Proposition~C.3 of
	\cite{RassoulAghaShenZhangZheng26}, applied after relabeling
	\(B_k,\ldots,B_n\) as \(B_0,\ldots,B_{n-k}\), gives the locally uniform
	total-variation convergence
	\begin{align}\label{eq:quenched-TV-convergence}
		\sup_{x\in J}
		\left\|
		Q_{(x,k),n}^{\be,\eps}
		-Q_{(x,k),n}^{\be}
		\right\|_{\mathrm{TV}}
		\longrightarrow0.
	\end{align}
	The corresponding product measures therefore converge locally uniformly in
	total variation as well.
	
	Let \(R_\phi\) satisfy \(\operatorname{supp}\phi\subset[-R_\phi,R_\phi]\).
	The only obstruction to replacing \(\Psi^\eps\) by \(\frac12\sgn\) is a
	small neighborhood of zero.  Under the reference measures every nonconstant
	coordinate difference \(R(\eta,\ga)=y+\eta_p-x-\ga_q\) has a locally
	bounded density, uniformly for \(x\) in a compact set.  Consequently its
	reference probability of \(\{|R|\leq\delta\}\) is \(O(\delta)\).
	Cauchy--Schwarz and the uniform \(L^2\) bounds for the Gibbs densities in
	\cite[Proposition~C.1 and Lemma~C.2]{RassoulAghaShenZhangZheng26}, after
	relabeling the upper levels when \(k>0\), give
	\begin{align}\label{eq:small-diagonal}
		\lim_{\delta\downarrow0}
		\sup_{\substack{x\in I\\0\leq\eps\leq\eps_I}}
		(Q_{(y,k),n}^{\be,\eps}\otimes Q_x^{\be,\eps})
		\{|R(\eta,\ga)|\leq\delta\}=0
	\end{align}
	for some \(\eps_I>0\).  On the spatial diagonal the same estimate holds
	uniformly for \(x\) in the compact set \(J\) from part~(ii), for every
	nonconstant difference.  An identically zero difference contributes nothing
	because \(\Psi^\eps(0)=\sgn(0)=0\).

	Put
	\(\nu_{x,y}^\eps=Q_{(y,k),n}^{\be,\eps}\otimes
	Q_x^{\be,\eps}\) and
	\(\nu_{x,y}=Q_{(y,k),n}^{\be}\otimes Q_x^\be\).  Then
	\begin{align}
		\left|
		\int\Psi^\eps(R)\,d\nu_{x,y}^\eps
		-\frac12\int\sgn(R)\,d\nu_{x,y}
		\right|\leq
		\|\nu_{x,y}^\eps-\nu_{x,y}\|_{\mathrm{TV}}
		+
		\nu_{x,y}\{|R|\leq2R_\phi\eps\}\longrightarrow 0.
		\label{eq:sign-kernel-convergence}
	\end{align}
	Combining \eqref{eq:sign-kernel-convergence} with
	\eqref{eq:quenched-TV-convergence}, term by term in
	\eqref{eq:covariance-kernel}, gives locally uniform convergence to a sign
	kernel.  This separates the analytic part of the limit from the geometric
	identification that follows.
	
	Denote the limiting kernel by \(\cK_{0,k}^{\be}(x,y)\).  Suppose first
	that \(k>0\).  Condition the reference polymer on
	\((\ga_1,\ldots,\ga_k)\).  By the memoryless property of the Poisson
	reference measure and factorization of the Gibbs weight, the shifted upper
	tail has quenched law \(Q_{(x+\ga_k,k),n}^{\be}\).  After relabeling levels
	\(k,\ldots,n\) as \(0,\ldots,n-k\), the two tails are in the setting of
	\cite[Lemma~2.2, Equation~(2.14)]{RassoulAghaShenZhangZheng26}.  The
	rectangular sign increments and the terminal cross term collapse to the sign
	of the separation at level \(k\).  Averaging over the lower segment and using \(\frac12\sgn(u)=\ind_{\{u>0\}}-\frac12,\)
    gives
	\begin{align}
		\cK_{0,k}^{\be}(x,y)
		&=
		\frac12\int\sgn(y-x-\ga_k)\,Q_x^\be(d\ga)
		+\frac12\int\sgn(x+\ga_n)\,Q_x^\be(d\ga)
		\notag\\
		&=
		Q_x^\be\{\ga_n>-x\}
		-
		Q_x^\be\{\ga_k>y-x\}.
		\label{eq:mixed-sign-identification}
	\end{align}
	If \(k=0\) and \(x\ne y\), the same identity reduces to
	\begin{align}
		&\cK_{0,0}^{\be}(x,y)=
		\frac12\sgn(y-x)
		+
		\frac12
		\int\sgn(x+\ga_n)
		Q_x^\be(d\ga)\notag\\*
		&\hspace{10mm}=
		Q_x^\be\{\ga_n>-x\}
		-\ind_{\{x>y\}}.
		\label{eq:same-level-sign-identification}
	\end{align}
	The quenched measures are absolutely continuous with respect to the
	Poisson jump-time law, so their endpoint laws have no atoms and no
	correction at zero is needed.  Equations
	\eqref{eq:mixed-sign-identification} and
	\eqref{eq:same-level-sign-identification} prove
	\eqref{eq:off-diagonal-kernel-limit}.
	
	On the diagonal, \(\ga_k>0\) almost surely when \(k>0\), and
	\eqref{eq:mixed-sign-identification} gives
	\(\cK_{0,k}^{\be}(x,x)=Q_x^\be\{\ga_n>-x\}-1\).
	When \(k=0\), interchange of the two replicas changes the sign of
	every term in \eqref{eq:covariance-kernel} except the anchored terminal
	term.  Consequently,
	\begin{align*}
		\cK_{0,0}^{\be,\eps}(x,x)=
		\int\Psi^\eps(x+\ga_n)
		Q_x^{\be,\eps}(d\ga)\longrightarrow
		\frac12
		\int\sgn(x+\ga_n)
		Q_x^\be(d\ga)
		=
		Q_x^\be\{\ga_n>-x\}-\frac12.
	\end{align*}
	This proves \eqref{eq:diagonal-kernel-limit}.  Since the kernels are bounded
	by a deterministic constant depending only on \(n\), the \(L^p\)
	convergence follows by dominated convergence.
\end{proof}

\subsection{Proof of the multilevel identity}

The preceding lemma contains the geometric part of the argument.  We now pass
from the mollified identity to its limit, first away from the marked points
and then across positive-level coincidences.  The level-zero diagonal is
handled separately by the even mollification.

\begin{proof}[Proof of Theorem~\ref{thm:multilevel-identity}]
	We first work away from all the points \(y_j\); the extension across a point
	whose starting level is positive will be handled at the end.  Write
	\(\mathbf F=(F_{(y_1,k_1),n}^{\be},\ldots,
	F_{(y_m,k_m),n}^{\be})\) and
	\(\mathbf F^\eps=(F_{(y_1,k_1),n}^{\be,\eps},\ldots,
	F_{(y_m,k_m),n}^{\be,\eps})\).  Set
	\(A_\eps(x)=\EE[\varphi(\mathbf F^\eps)F_x^{\be,\eps}]\) and
	\(A(x)=\EE[\varphi(\mathbf F)F_x^\be]\), and fix a compact interval
	\(I\Subset\R\setminus\{y_1,\ldots,y_m\}\).
	By Lemma~\ref{lem:mollified-ibp},
	\begin{align}\label{eq:mollified-ibp-on-I}
		A_\eps'(x)
		=
		\sum_{j=1}^{m}
		\EE\left[
		\partial_j\varphi(\mathbf F^\eps)
		\cK_{0,k_j}^{\be,\eps}(x,y_j)
		\right],
		\qquad x\in I.
	\end{align}
	
	By \cite[Lemma~A.3(iv)]{RassoulAghaShenZhangZheng26}, applied after
	relabeling the levels for each starting height \(k_j\),
	\(\mathbf F^\eps\to\mathbf F\) in \(L^p(\Omega)\) for every finite
	\(p\).  Since \(\partial_j\varphi\) is bounded and continuous,
	\(\partial_j\varphi(\mathbf F^\eps)\to
	\partial_j\varphi(\mathbf F)\) in \(L^1(\Omega)\).
	Write \(\cK_{0,k_j}^{\be}(x,y_j)\) for the limit in
	\eqref{eq:off-diagonal-kernel-limit}, and let \(C\) be a common deterministic
	bound for these kernels.  Add and subtract the mixed term with
	\(\partial_j\varphi(\mathbf F)\) and
	\(\cK_{0,k_j}^{\be,\eps}\).  The \(L^1\) convergence of the first factor,
	the boundedness of \(\partial_j\varphi\), and
	Lemma~\ref{lem:covariance-kernel-limit} then give uniform convergence of the
	derivatives on \(I\):
	\begin{align}\label{eq:uniform-derivative-limit}
		A_\eps'(x)
		\longrightarrow
		\sum_{j=1}^{m}
		\EE\left[
		\partial_j\varphi(\mathbf F)
		\left\{
		Q_x^\be\{\ga_n>-x\}
		-Q_x^\be\{\ga_{k_j}>y_j-x\}
		\right\}
		\right].
	\end{align}
	
	The moment bounds in
	\cite[Lemma~A.3(iii)]{RassoulAghaShenZhangZheng26}, with the same level
	relabeling, together with the linear growth of \(\varphi\) and
	H\"older's inequality, also give
	\(A_\eps(x)\to A(x)\) at each fixed \(x\).  The uniform convergence theorem
	for derivatives now applies; hence \(A\) is continuously differentiable on
	\(I\), with derivative given by
	\eqref{eq:uniform-derivative-limit}.
	
	It remains to decide which of the points \(y_j\) are genuine singularities.
	Let \(q_x^\be=dQ_x^\be/d\mu_\be\).  If \(x_r\to x\), choose a compact
	interval containing the sequence and its limit.  The Brownian growth bound
	used in \cite[Proposition~C.1]{RassoulAghaShenZhangZheng26} gives an
	integrable majorant for the corresponding Gibbs densities on this compact
	set, while the Hamiltonian and partition function are continuous in the base
	point.  Dominated convergence therefore gives
	\begin{align}\label{eq:spatial-TV-continuity}
		&\|Q_{x_r}^\be-Q_x^\be\|_{\mathrm{TV}}
		=\int_{\DN}|q_{x_r}^\be-q_x^\be|\,d\mu_\be\longrightarrow0.
	\end{align}
	If \(k_j>0\) and \(x_r\to y_j\), then
	\begin{align*}
		&\left|Q_{x_r}^\be\{\ga_{k_j}>y_j-x_r\}-1\right|
		\leq\|Q_{x_r}^\be-Q_{y_j}^\be\|_{\mathrm{TV}}
		+Q_{y_j}^\be\{0<\ga_{k_j}\leq|x_r-y_j|\}\longrightarrow0,\\
		&\left|Q_{x_r}^\be\{\ga_n>-x_r\}
		-Q_{y_j}^\be\{\ga_n>-y_j\}\right|
		\leq\|Q_{x_r}^\be-Q_{y_j}^\be\|_{\mathrm{TV}}
		+Q_{y_j}^\be\{|\ga_n+y_j|\leq|x_r-y_j|\}\longrightarrow0.
	\end{align*}
	Here both final probabilities vanish because the quenched coordinate laws
	have no atoms.
	The right-hand side of \eqref{eq:multilevel-identity} consequently has the
	same limit from both sides.  Integrating the identity up to \(y_j\) from
	either side shows that \(A\) is differentiable there with this common
	derivative.  Hence
	\eqref{eq:multilevel-identity} holds, with a continuous derivative, on
	every connected component of
	\(\R\setminus\{y_j:k_j=0\}\).
	
	For the final, diagonal statement, fix \(x\in\R\) and return to the
	mollified identity \eqref{eq:mollified-ibp}, which has no off-diagonal
	restriction.  Use
	\eqref{eq:off-diagonal-kernel-limit} when \(y_j\ne x\) and
	\eqref{eq:diagonal-kernel-limit} when \(y_j=x\).  The boundedness of
	\(\partial_j\varphi\) and of the kernels permits passage to the limit and
	gives \eqref{eq:multilevel-diagonal}.
\end{proof}

\section*{Declaration on the use of generative AI}

Generative AI tools were used for language editing, \LaTeX{} formatting,
and preliminary literature searches.  All mathematical arguments,
calculations, and references were checked by the author, who takes full
responsibility for the content of the paper.

\end{document}